\theoremstyle{plain}
\newtheorem{thm}{\protect\theoremname}[section]
  \theoremstyle{definition}
  \newtheorem{defn}[thm]{\protect\definitionname}
  \theoremstyle{plain}
  \newtheorem{prop}[thm]{\protect\propositionname}
  \theoremstyle{plain}
  \newtheorem{lem}[thm]{\protect\lemmaname}
  \theoremstyle{plain}
  \newtheorem{cor}[thm]{\protect\corollaryname}
\DeclareMathOperator*{\dls}{\bar{d}-limsup}
\DeclareMathOperator*{\dlim}{\bar{d}-lim}
\renewcommand\footnotemark{}
  \providecommand{\corollaryname}{Corollary}
  \providecommand{\definitionname}{Definition}
  \providecommand{\lemmaname}{Lemma}
  \providecommand{\propositionname}{Proposition}
\providecommand{\theoremname}{Theorem}
\begin{document}

\title{On the ratio ergodic theorem for group actions }

\author{Michael Hochman\thanks{Supported by ISF grant 1409/11}\thanks{MSC: 28D15, 37A30, 37A40, 47A35}}
\maketitle
\begin{abstract}
We study the ratio ergodic theorem (RET) of Hopf for group actions.
Under a certain technical condition, if a sequence of sets $\{F_{n}\}$
in a group satisfy the RET, then there is a finite set $E$ such that
$\{EF_{n}\}$ satisfies the Besicovitch covering property. Consequently
for the abelian group $G=\oplus_{n=1}^{\infty}\mathbb{Z}$ there is
no sequence $F_{n}\subseteq G$ along which the RET holds, and in
many finitely generated groups, including the discrete Heisenberg
group and the free group on $\geq2$ generators, there is no (sub)sequence
of balls, in the standard generators, along which the RET holds. 

On the other hand, in groups with polynomial growth (including the
Heisenberg group, to which our negative results apply) there always
exists a sequence of balls along which the RET holds if convergence
is understood as a.e. convergence in density (i.e. omitting a sequence
of density zero). 
\end{abstract}

\section{\label{sec:Introduction}Introduction}

Let $G$ be a countable group acting from the left by measure-preserving
transformations on a measure space $(X,\mathcal{B},\mu)$, with the
action of $g\in G$ on $x\in X$ written $x\mapsto T^{g}x$. We assume
the action is ergodic. For a finite set $F\subseteq G$ and $\varphi:X\rightarrow\mathbb{R}$
let 
\[
S_{F}(\varphi)=\sum_{g\in F}\varphi\circ T^{g}
\]
The asymptotic behavior of $S_{F_{n}}(\varphi)$ as $F_{n}$ exhausts
the group, in some sense, is the subject of the ergodic theorem. In
this paper we are interested in the situation when $\mu$ is an infinite
(without loss of generality $\sigma$-finite) measure, in which case
the appropriate quantity to consider are the ratios 
\[
R_{F}(\varphi,\psi)=\frac{S_{F}(\varphi)}{S_{F}(\psi)}
\]
One says that $G$ satisfies a ratio ergodic theorem along $\{F_{n}\}$
if for every ergodic measure-preserving action of $G$ on a non-atomic
measure space, and every $\varphi,\psi\in L^{1}(\mu)$ with $\int\psi d\mu\neq0$,
we have 
\[
R_{n}(\varphi,\psi)\rightarrow\frac{\int\varphi d\mu}{\int\psi d\mu}\qquad\mu\mbox{-a.e.}
\]

For $G=\mathbb{Z}$ and $F_{n}=[1,n]\cap\mathbb{Z}$ the ratio ergodic
theorem was proved by Hopf in 1937, only a few years after the ergodic
theorems of von Neumann and Birkhoff. Unlike the latter theorems,
however, which have been extended to very general classes of groups
(see \cite{Nevo2006} for a recent survey), extensions of Hopf's theorem
have been slow to appear. Part of the reason is that, for a time,
it was believed that no extension is possible, due to an example of
Brunel and Krengel \cite{Krengel1985}, who showed for $\mathbb{Z}^{d}$,
$d\geq2$, that ratio ergodic theorem fails along $F_{n}=[0,n]^{d}$.
Nevertheless there is a ratio ergodic theorem for actions of $\mathbb{Z}^{d}$
along symmetric cubes $F_{n}=[-n,n]^{d}\cap\mathbb{Z}^{d}$. This
was first proved by Feldman under additional assumptions on the dynamics
of the action \cite{Feldman2007}, and we proved the general case
in \cite{Hochman2009e} (also for some more general sequences $F_{n}$).%
\footnote{Bowen and Nevo have also recently obtained a variant of the ratio
ergodic theorem for free groups, but with some additional randomization
which makes the problem somewhat different.%
}

The prospects for groups other than $\mathbb{Z}^{d}$ has remained
unclear. A pertinent fact from \cite{Hochman2009e} is that a certain
maximal inequality, which is central to most existing proofs, is actually
\emph{equivalent}, in the present context, to $\{F_{n}\}$ satisfying
the (right) Besicovitch covering property (Definition \ref{def:Besicovitch}
below). Sufficiency of this property was observed earlier by Becker
\cite{Becker1983} (see also \cite{Rudolph07,Lindenstrauss2006}).
The Besicovitch property is quite rare, and its failure puts into
question the validity of the ratio ergodic theorem for many groups.
But, on the other hand, as far as we know the maximal inequality and
the ratio ergodic theorem are not equivalent. Indeed for $\mathbb{Z}^{d}$
it took several decades to prove the latter once the former became
available. 

In this paper we present two main results. The first shows that, indeed,
the ratio ergodic theorem is quite rare, and is closely linked to
the Besicovitch property. The second, on the other hand, shows that
a certain weakening of it does hold more generally, including in groups
where the strong version above fails.

We begin with main negative result, which requires the following definition.
Let us say that $\{F_{n}\}$ is almost central if for every $g\in G$
there is a finite set $E$ with $F_{n}g\subseteq EF_{n}$ for all
$n$ (here and throughout we write $AB=\{ab\,:\, a\in A\,,\, b\in B\}$,
etc.). This holds trivially in abelian groups, and also for balls
in any finitely generated group.
\begin{thm}
\label{thm:main}If $\{F_{n}\}$ is almost central and satisfies the
ratio ergodic theorem, then there is a finite set $E$ such that $\{EF_{n}\}$
is Besicovitch.
\end{thm}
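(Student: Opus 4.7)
The plan is two-step: first to extract a Hopf-type weak maximal inequality from the pointwise convergence guaranteed by the RET, and then to invoke the equivalence (recalled in the introduction, following \cite{Hochman2009e}) between such a maximal inequality and the Besicovitch covering property. The almost-central hypothesis should enter at the interface between these steps, allowing right-translations of $F_n$ that arise naturally on the combinatorial side to be absorbed into a bounded left-thickening $E$.

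For the first step I would fix an ergodic non-atomic action of $G$ on $(X,\mu)$ and a non-negative $\psi \in L^1(\mu)$ with $\int \psi\, d\mu > 0$. The operators $\varphi \mapsto S_{F_n}\varphi/S_{F_n}\psi$ are positive and sublinear, and by the RET converge $\mu$-a.e.\ to $\int \varphi\, d\mu/\int \psi\, d\mu$ for every $\varphi \in L^1(\mu)$. A Banach--Sawyer/Stein-type principle for positive $L^1$-operators should convert this a.e.\ convergence into a weak-type bound, and a standard Hopf manipulation (interchanging numerator and denominator regions) should upgrade it to an inequality of the form
\[ \int_{\{\sup_n S_{F_n}\varphi / S_{F_n}\psi \,>\, \lambda\}} \psi \, d\mu \;\le\; \frac{C}{\lambda}\int \varphi\, d\mu, \]
with a finite constant $C$ depending only on $\{F_n\}$, in particular independent of the action and of the particular $\varphi,\psi$.

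For the second step I would transfer this analytic inequality to a canonical action, such as the translation action on $(G \times [0,1], \# \otimes \mathrm{Leb})$, with $\varphi$ and $\psi$ indicators of products of finite subsets of $G$ with $[0,1]$. The inequality then becomes a bound on overlap counts for right-translates $F_n g$, which the equivalence from \cite{Hochman2009e} identifies with the right Besicovitch property. The thickening arises because the covering configurations one obtains are naturally in terms of right-translates $F_n g$, whereas the target statement is phrased for the family $\{EF_n\}$. Almost-centrality furnishes, for each $g$, a finite $E_g$ with $F_n g \subseteq E_g F_n$ for all $n$; the technical heart of the argument, and the main obstacle I anticipate, is to show that only boundedly many such $g$'s enter any extremal covering configuration, so that the $E_g$'s can be amalgamated into a single finite $E$ uniform in the covering. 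This uniformity should follow from the quantitative Hopf form of the maximal inequality combined with the almost-central structure, but it will require careful book-keeping of the translates that appear during the Besicovitch extraction --- and this is precisely the place where almost-centrality, rather than a weaker invariance assumption, seems indispensable.
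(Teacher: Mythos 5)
Your plan has a genuine gap at its first step, and the gap is not a technicality. The ratio ergodic theorem is an a.e.-convergence statement for each fixed action; the classical Banach principle extracts from it only that the maximal ratio operator is finite a.e.\ and continuous at $0$ in measure, \emph{not} a weak-type bound of the Hopf form with a constant $C/\lambda$. The Sawyer/Stein-type upgrades you invoke require positive linear operators that genuinely commute with the (ergodic) action, and here commutation fails: with $S_F(\varphi)=\sum_{h\in F}\varphi\circ T^h$ one has $(S_{F_n}\varphi)\circ T^g=S_{F_ng}\varphi$ while $S_{F_n}(\varphi\circ T^g)=S_{gF_n}\varphi$, so for nonabelian $G$ the operators only ``almost'' commute, and almost centrality ($F_ng\subseteq E_gF_n$) is a one-sided containment up to a thickening; producing a Sawyer-type principle tolerant of this defect is precisely the missing argument, not book-keeping. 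Moreover, the constant such abstract principles produce is tied to the fixed action, whereas your step two needs the inequality on a test system encoding the group's covering combinatorics --- and the canonical action you propose on $G\times[0,1]$ is not ergodic (every $G\times A$ is invariant), so the RET hypothesis does not even apply to it. Most tellingly, if your step one could be carried out for $\{F_n\}$ itself, then by the equivalence of the maximal inequality with the Besicovitch property from \cite{Hochman2009e} you would conclude that $\{F_n\}$ itself is Besicovitch, a statement this paper explicitly leaves open; the thickening $E$ in Theorem \ref{thm:main} is not a nuisance to be ``amalgamated'' at the end but reflects exactly what can be controlled.

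The paper argues in the opposite direction, constructively and without any maximal inequality. Negating the conclusion of Theorem \ref{thm:main} says that $\{F_n\}$ is strongly non-Besicovitch (Definition \ref{def:strongly-non-B}): every finite $E$ admits a finite $\widetilde E\supseteq E$ with $\{\widetilde EF_n\}$ non-Besicovitch. Via Proposition \ref{prop:high-multiplicity-incremental-sequences}, this yields arbitrarily long incremental sequences of high multiplicity for the thickened families, and Lemmas \ref{lem:strongly-non-B-consequences} and \ref{cor:strongly-non-B-almost-central-consequences} (this is where almost centrality is used) convert them into configurations $\{F_{n(i)}g_i\}$ that are disjoint from the current stack and from each other, yet such that a single finite set $H$ meets every $F_{n(i)}eg_i$. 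Theorem \ref{thm:B-is-necessary} then runs a cutting-and-stacking construction: at each stage the stack is cut into $N$ copies placed at the $g_i$, new mass is added on $H$ carrying a very negative value of $\varphi$ and $\psi=0$, forcing the ratios over the appropriate $F_{n(i)}$ to drop below $-1$ on the whole previous stack while increasing $\Vert\varphi\Vert_1$ by only $|H|v/N$; alternating signs makes $R_{F_n}(\varphi,\psi)$ oscillate between $\geq1$ and $\leq-1$ a.e., contradicting the RET directly. To salvage your route you would have to prove a transference/Sawyer principle adapted to the non-commuting, thickened setting and to a suitable ergodic test action --- in effect a new result --- rather than cite existing principles.
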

For a marginally stronger statement see Theorem \ref{thm:B-is-necessary}.
It seems possible that the Besicovitch property is necessary in general
for a ratio ergodic theorem, but this remains open.

We give two main applications. First,
\begin{thm}
\label{thm:Z-infinity}Let $\mathbb{Z}^{\infty}=\oplus_{n=1}^{\infty}\mathbb{Z}$.
Then the ratio ergodic theorem fails along every sequence $F_{n}\subseteq\mathbb{Z}^{\infty}$.
\end{thm}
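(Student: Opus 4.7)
The plan is to derive a contradiction using Theorem~\ref{thm:main}. Assume $\{F_n\}$ satisfies the RET in $\mathbb{Z}^\infty$. Since $\mathbb{Z}^\infty$ is abelian, $F_n g=gF_n=\{g\}F_n$ for every $g$, so $\{F_n\}$ is trivially almost central; Theorem~\ref{thm:main} then yields a finite set $E\subseteq\mathbb{Z}^\infty$ and a constant $C$ such that $G_n:=EF_n$ satisfies the Besicovitch covering property with constant $C$. The remainder of the argument must show that no such Besicovitch sequence can exist when $\{F_n\}$ is an RET sequence.

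The next step is to extract from the RET that $|\operatorname{supp}(F_n)|\to\infty$, where $\operatorname{supp}(g)\subseteq\mathbb{N}$ denotes the finite set of coordinates on which $g\in\mathbb{Z}^\infty$ is non-zero and $\operatorname{supp}(F):=\bigcup_{g\in F}\operatorname{supp}(g)$. For each $i\in\mathbb{N}$, take an ergodic infinite-measure $\mathbb{Z}$-action on some $(Y,\nu)$ and extend it to $\mathbb{Z}^\infty$ via the projection onto the $i$-th coordinate, so that $\mathbb{Z}^{\mathbb{N}\setminus\{i\}}$ acts trivially; the extension remains ergodic. For any $\varphi,\psi\in L^1(\nu)$ with $\int\psi\, d\nu\neq 0$, whenever $i\notin\operatorname{supp}(F_n)$ every $g\in F_n$ acts trivially on $(Y,\nu)$, so $S_{F_n}(\varphi)=|F_n|\varphi$ and $R_{F_n}(\varphi,\psi)=\varphi/\psi$; for generic $\varphi,\psi$ this is not a.e.\ equal to the constant $\int\varphi\, d\nu/\int\psi\, d\nu$. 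Hence RET forces $i\in\operatorname{supp}(F_n)$ for all sufficiently large $n$, and since $i\in\mathbb{N}$ was arbitrary, $|\operatorname{supp}(F_n)|\to\infty$; as $E$ is finite, $|\operatorname{supp}(G_n)|\to\infty$ as well.

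The principal — and hardest — step is to contradict the uniform Besicovitch bound $C$ using $|\operatorname{supp}(G_n)|\to\infty$. The guiding intuition is that Besicovitch constants in $\mathbb{Z}^d$ must grow with $d$ once the covering sets meaningfully occupy all $d$ coordinate directions, so a sequence spanning arbitrarily many independent directions cannot be uniformly Besicovitch. Concretely, one strategy is to find, for $n$ large, elements $v_1,\ldots,v_m\in G_n$ with $m>C$ whose pairwise quotients $v_iv_j^{-1}$ lie outside $G_n$ for $i\neq j$: then the centres $\{v_j^{-1}\}_{j=1}^m$ with the corresponding translates $\{v_j^{-1}G_n\}_{j=1}^m$ force every admissible sub-cover to include all $m$ translates (since $v_i^{-1}$ is covered only by $v_i^{-1}G_n$), and $v_j\in G_n$ gives $e\in v_j^{-1}G_n$, so the multiplicity at the identity is at least $m>C$, contradicting the Besicovitch bound. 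The remaining technical obstacle is to produce such a Sidon-type collection inside $G_n$ using only the coordinate-support growth; one expects this to require passing to a subsequence of $\{F_n\}$ or enlarging $E$ to enforce enough independence among the elements of $G_n$ that witness its many active coordinates.
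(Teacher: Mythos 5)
Your overall strategy (assume the RET, use almost centrality to get a Besicovitch sequence $\{EF_n\}$ via Theorem \ref{thm:main}, then contradict this combinatorially) is the right shape, and your factor-action argument showing that every coordinate must eventually lie in $\operatorname{supp}(F_n)$ is a sound analogue of Lemma \ref{lem:reduction-to-generating-sequences}. But the step you yourself flag as an ``obstacle'' is in fact the heart of the theorem, and it is missing. You need to show that no sequence of the form $\{EF_n\}$ can be Besicovitch; in the paper this is exactly Proposition \ref{prop:no-B-sequences-in-Z-infinity} (any generating sequence of finite subsets of $\mathbb{Z}^{\infty}$ fails the Besicovitch property), which is a nontrivial combinatorial result imported from \cite{Hochman2006a} and is not something that follows readily from $|\operatorname{supp}(G_n)|\to\infty$. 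Your proposed route --- finding, inside a \emph{single} $G_n$, more than $C$ elements whose pairwise differences all escape $G_n$ --- is not known to be forced by support growth alone; the known argument builds high-multiplicity \emph{incremental sequences} (Proposition \ref{prop:high-multiplicity-incremental-sequences}) using translates of sets with \emph{different} indices $n$, exploiting the fact that the union of the $F_n$ generates, not just that individual supports are large. Without either proving your Sidon-type claim or invoking the incompressibility result, the proof does not close.

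A secondary issue: you apply Theorem \ref{thm:main} to an arbitrary sequence $F_n\subseteq\mathbb{Z}^{\infty}$, but its proof in the paper (via Theorem \ref{thm:B-is-necessary}) is carried out under the standing assumptions that the $F_n$ are symmetric and contain the identity. This is precisely why the paper does \emph{not} deduce Theorem \ref{thm:Z-infinity} from Theorem \ref{thm:main} directly, but instead first reduces to sequences containing $0$ (Lemma \ref{lem:reduction-to-sets-with-identity}) and then handles asymmetry by the symmetrization $E_n=F_n\cup(-F_n)$ together with the averaged ratios $R_{\nu_n}$ (Proposition \ref{prop:no-ratio-for-Z-infinity}), using that in $\mathbb{Z}^{\infty}$ the reversed action $\widetilde{T}^{g}=T^{-g}$ transfers the RET from $\{F_n\}$ to $\{-F_n\}$. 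Your write-up would need an analogous reduction (or a proof of Theorem \ref{thm:main} without the symmetry and identity hypotheses) before the first step is legitimate. Related small points: you should arrange $0\in G_n$ (e.g.\ $0\in E\cap F_n$ after the reduction) for your covering argument to make sense, since otherwise the translate $G_n-v_i$ need not cover its own center.
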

The group $\mathbb{Z}^{\infty}$ is extremely {}``nice'' -- it is
abelian, amenable and residually finite. As such it is quite surprising
that the ratio ergodic theorem should fail. However one might object
that it is {}``infinite-dimensional''. So suppose now that $G$
is generated by a finite symmetric set $A$ and let $B_{n}=A^{n}$
be the associated {}``balls''. 
\begin{thm}
\label{thm:subseq-balls}Let $G$ be finitely generated and $\{B_{n}\}$
the sequence of balls with respect to some finite generating set.
Suppose that no sequence of balls has the Besicovitch property. Then
the ratio ergodic theorem fails along every sub-sequence $F_{i}=B_{n(i)}$.
\end{thm}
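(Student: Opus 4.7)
The plan is to derive Theorem \ref{thm:subseq-balls} from Theorem \ref{thm:main} by contradiction. First I would verify that every subsequence of balls in a finitely generated group is almost central: with $A$ a symmetric generating set containing $e$ and $B_{n}=A^{n}$, one has $B_{n}g\subseteq B_{n+\ell}=B_{\ell}B_{n}$ for every $g\in G$, where $\ell$ is the word length of $g$. Hence $E=B_{\ell}$ witnesses the almost-central property, and this passes to any subsequence $\{B_{n(i)}\}$ with the same witness, so Theorem \ref{thm:main} applies to any such subsequence.

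Next, suppose for contradiction that the RET holds along some $F_{i}=B_{n(i)}$. Theorem \ref{thm:main} then furnishes a finite set $E$, which I may enlarge to contain $e$, such that $\{EB_{n(i)}\}$ is Besicovitch with some constant $C_{0}$. Fix $k$ with $E\subseteq B_{k}$; then
\[
B_{n(i)}\;\subseteq\;EB_{n(i)}\;\subseteq\;B_{k}B_{n(i)}\;=\;B_{n(i)+k}.
\]
The contradiction should come from concluding that the subsequence of balls $\{B_{n(i)+k}\}$ is itself Besicovitch, violating the standing hypothesis.

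To transfer the Besicovitch property from $\{EB_{n(i)}\}$ to the ball sequence $\{B_{n(i)+k}\}$, my plan is to exploit the decomposition $B_{n+k}=B_{k}B_{n}=\bigcup_{h\in B_{k}}hB_{n}$. Given an arbitrary collection of ball-translates $\{g_{\alpha}B_{n(i_{\alpha})+k}\}$, each member is a union of $|B_{k}|$ translates of $B_{n(i_{\alpha})}$. A pigeonhole/layer decomposition over $h\in B_{k}$ should reduce the Besicovitch selection problem for $\{B_{n(i)+k}\}$ to $|B_{k}|$ instances of the problem for $\{EB_{n(i)}\}$ (each solvable by hypothesis), with the recombination paying a factor of $|B_{k}|$ in the multiplicity constant.

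The main obstacle is precisely this transfer. Because Besicovitch is not preserved by naive inflation of sets — a point outside $g_{\alpha}EB_{n(i_{\alpha})}$ may well lie in $g_{\alpha}B_{n(i_{\alpha})+k}$ — a direct inclusion argument fails, and the multiplicity bound can in principle blow up. The delicate point in the layered approach is to verify that both the covering and the multiplicity bounds survive the recombination of the $|B_{k}|$ layers, and this combinatorial step is where I expect the bulk of the work to lie.
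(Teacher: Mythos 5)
Your route is genuinely different from the paper's. The paper does not deduce this theorem from Theorem \ref{thm:main} at all: it invokes the stronger Theorem \ref{thm:B-is-necessary} directly, checking that under the hypothesis every subsequence of balls is symmetric, contains $1_G$, is almost central, and is \emph{strongly} non-Besicovitch --- the last point via exactly the identity you use, $B_{m}B_{n(i)}=B_{n(i)+m}$, which keeps the enlarged sets inside the class of ball sequences, where the hypothesis applies; no transfer of the Besicovitch property from one sequence to another is ever needed there. Your argument instead treats Theorem \ref{thm:main} as a black box and requires the extra combinatorial fact that the Besicovitch property survives left-inflation by a fixed finite set. That fact is true, so your plan does yield a correct and rather modular proof; note only that it uses that the $E$ of Theorem \ref{thm:main} may be taken to contain $1_G$ (harmless, and avoidable anyway since $B_n\subseteq E^{-1}EB_n$), and that Theorem \ref{thm:main} is proved in the paper under the standing assumptions that the $F_n$ are symmetric and contain $1_G$, which balls for a symmetric generating set satisfy.

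The step you flag as the main obstacle is in fact a two-line estimate, and the fear of a multiplicity blow-up is unfounded: the point you are missing is that the multiplicity bound in Definition \ref{def:Besicovitch} holds at \emph{every} point of $G$, not only on $A$. Take $1_G\in E\subseteq B_k$, so that $EB_{n(i)}\subseteq B_{n(i)+k}=B_kB_{n(i)}\subseteq B_kEB_{n(i)}$. Given a finite $A$ and a family $\{B_{n(i(g))+k}\,g\}_{g\in A}$ (right translates, as in Definition \ref{def:Besicovitch}), perform a \emph{single} selection: apply the Besicovitch property of $\{EB_{n(i)}\}$ to $\{EB_{n(i(g))}g\}_{g\in A}$, obtaining $A'\subseteq A$ with $1_A\le\sum_{g\in A'}1_{EB_{n(i(g))}g}\le C_0$ on all of $G$. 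The fattened sets over $A'$ still cover $A$, and for every $x\in G$,
\[
\sum_{g\in A'}1_{B_{n(i(g))+k}g}(x)\;\le\;\sum_{h\in B_k}\sum_{g\in A'}1_{EB_{n(i(g))}g}(h^{-1}x)\;\le\;|B_k|\,C_0,
\]
so $\{B_{n(i)+k}\}$ is Besicovitch with constant $|B_k|C_0$; since this is again a sequence of balls, the hypothesis is contradicted. One caution about your phrasing ``reduce to $|B_k|$ instances of the selection problem'': you cannot select individual layers $hB_{n(i)}g$, since only whole members of the family may be chosen; the layering enters only in the multiplicity estimate above, after the single selection. Also keep the translates on the right throughout, consistent with Definition \ref{def:Besicovitch}.
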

In particular, in the discrete Heisenberg group with the standard
generator set there is no sequence of balls which satisfies the ratio
ergodic theorem (see Section \ref{sub:Derivation-of-main-results}).
The same holds for free groups on $\geq2$ generators. In these examples
we cannot yet rule out the existence of some other sequence along
which it does hold, and one should note that there are finitely generated
groups, such as the lamplighter groups, for which balls are not the
right averaging sets to consider. But for groups of polynomial growth
all known ergodic theorems do hold along balls, and it would be quite
surprising if some other good sequence exists. 

We turn now to our positive result which shows that, if one accepts
a slightly weaker notion of convergence, then there is a version of
the ratio ergodic theorem which holds in greater generality. Recall
that a group $G$ has polynomial growth if the balls $B_{n}$ satisfy
$|B_{n}|\leq c'n^{c}$ for constants $c,c'$. Define the (upper) density
of a set $I\subseteq\mathbb{N}$ of integers by
\[
\overline{d}(I)=\limsup_{N\rightarrow\infty}\frac{1}{N}|I\cap[1,N]|
\]
A sequence $a_{n}$ converges to $a$ in density if 
\[
\overline{d}(n\,:\,|a-a_{n}|>\varepsilon)=0\qquad\mbox{for every }\varepsilon>0
\]
We denote this limit by $a_{n}\xrightarrow{\overline{d}}a$ or $\dlim a_{n}=a$.
This operator satisfies all the usual properties of limits. 
\begin{thm}
\label{thm:polynomial-density-thm}Let $G$ be a group of polynomial
growth and $B_{n}$ as above. Then there is a subsequence $F_{i}=B_{n(i)}$
along which the ratio ergodic theorem holds in density, i.e. 
\begin{equation}
R_{F_{i}}(\varphi,\psi)\xrightarrow{\overline{d}}\frac{\int\varphi d\mu}{\int\psi d\mu}\qquad\mu\mbox{-a.e.}\label{eq:2}
\end{equation}
for any ergodic measure-preserving action of $G$ and any $\varphi,\psi\in L^{1}(\mu)$
with $\int\psi d\mu\neq0$.
\end{thm}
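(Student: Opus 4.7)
The plan is to exploit the polynomial-growth structure of $G$ to build a subsequence of balls which, although (in view of Theorem \ref{thm:subseq-balls}) cannot itself be Besicovitch, admits a density-one set of ``good'' scales on which a near-Besicovitch covering property holds, and then to convert this into in-density a.e.\ convergence. By Gromov's theorem $G$ is virtually nilpotent, and by Bass's theorem $|B_n|=cn^d+O(n^{d-1})$ for some integer $d\geq 0$ and $c>0$, so $|B_{n+1}|/|B_n|\to 1$ and $\{B_n\}$ is F\o{}lner. These asymptotics are the only structural input I would need.

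The first step is to select $F_i=B_{n(i)}$ with $n(i+1)/n(i)\to 1$ while $n(i+1)-n(i)\to\infty$, so that $\{F_i\}$ remains F\o{}lner and successive scales differ noticeably. The central step is then to identify a density-one set $J\subseteq\mathbb{N}$ of ``good'' indices on which $\{F_i:i\in J\cap[1,N]\}$ enjoys a Besicovitch-type bounded-overlap property, with a constant depending only on $d$. The heuristic is that the overlap excess created at scale $i$ is controlled by boundary terms of size $|F_{i+1}|-|F_i|$; polynomial growth makes these excesses summable on average, forcing only a density-zero set of scales to exhibit nontrivial bad behaviour.

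Given the near-covering property on $J$, a weak-type $(1,1)$ maximal inequality for the ratio averages along $i\in J$ follows from the classical argument deriving a maximal inequality from Besicovitch (the sufficiency direction noted by Becker and used implicitly in the proof of Theorem \ref{thm:main}). A standard Banach-principle approximation then propagates a.e.\ convergence from a dense class of pairs $(\varphi,\psi)$ --- for instance $\psi=\mathbf{1}_A$ and $\varphi=c\psi$ plus an $L^1$ coboundary along a suitable element of $G$ --- to all admissible pairs with $\int\psi\,d\mu\neq 0$. Because $J$ has density one in $\mathbb{N}$, a.e.\ convergence along $i\in J$ \emph{is} a.e.\ convergence in density along $\{F_i\}$, yielding (\ref{eq:2}).

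The chief obstacle will be the construction of the density-one set $J$. Theorem \ref{thm:subseq-balls} applied to the discrete Heisenberg group (which has polynomial growth) shows that the exceptional set of bad scales cannot be eliminated altogether, so a purely qualitative argument cannot succeed: we must actually show that the exceptional set is of density zero, not merely nonempty. What seems to be needed is quantitative control on how often ``resonance'' of balls at consecutive scales can occur, most likely via Pansu's scaling limit --- the rescaled balls $n^{-1}B_n$ converge in Gromov--Hausdorff sense to a Carnot--Carath\'eodory ball, and this asymptotic self-similarity should force near-efficient overlaps on almost every scale.
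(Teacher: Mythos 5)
Your central step --- finding a \emph{universal} density-one set $J\subseteq\mathbb{N}$ of scales on which $\{B_{n(i)}\}_{i\in J}$ satisfies a Besicovitch-type bounded-overlap property --- cannot work as stated, and this is where the proposal breaks. Proposition \ref{prop:Heisenberg} shows that in the discrete Heisenberg group \emph{every infinite} subsequence of balls fails the Besicovitch property; a density-one set of indices is in particular infinite, so no choice of $J$ (and no cleverness with Pansu's scaling limit) can produce the uniform covering constant you need to run the classical transference argument for a weak-type $(1,1)$ maximal inequality along $i\in J$. In other words, the obstruction is not a density-zero set of ``resonant'' scales that one can discard once and for all: it is intrinsic to every infinite family of balls, so the bad overlaps must be handled in a way that depends on the orbit point and on the functions, not combinatorially in the group alone. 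A second, smaller gap: you take for granted that the coboundary part of the dense class converges, but in infinite measure the statement $R_{F_i}(\tau-\tau^g,\psi)\to 0$ is itself the heart of the matter (it amounts to controlling $S_{\partial F_i}(\tau)/S_{F_i}(\psi)$ with $\tau,\psi\in L^1(\mu)$ and $\mu$ infinite), and it is not implied by the F\o{}lner property of your slowly growing subsequence.

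For comparison, the paper resolves both points simultaneously by choosing the scales to grow \emph{very fast} (organized in dyadic blocks $J_m$ with arithmetic gaps $3N(m)$ exceeding all previously used radii), and proving a Chacon--Ornstein-type statement (Theorem \ref{thm:Chacon-Ornstein}): the thick-boundary ratios $\varphi_i$ of (\ref{eq:boundary-function}) tend to $0$ \emph{in density}, a.e.\ with respect to $\varphi\,d\mu$. The proof is a transference plus Borel--Cantelli argument in which a positive density of expanding scales forces $S_{F_N^+}(\varphi,x)\geq|F_N^+|^2\varphi(x)$, and the super-fast growth is exactly what makes $(1+\varepsilon)^{\delta 2^{m-1}}$ beat $|F_N^+|^2$; note this is the opposite regime from your choice $n(i+1)/n(i)\to 1$. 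This theorem does double duty: it gives convergence on coboundaries (Corollary \ref{cor:Chacon-Ornstein}, your dense class), and it supplies the \emph{pointwise, function-dependent} set of good indices used in the maximal inequality. The covering argument (Lemma \ref{lem:growth-from-large-ratios}) then only invokes bounded overlap for balls of \emph{comparable} radii within one block (Lemma \ref{lem:bounded-radius-Besicovitch}, a doubling consequence of polynomial growth), while overlaps across blocks are absorbed into the thick boundaries $\partial^*F_i$, whose mass is small precisely at the good indices. The resulting inequality (Theorem \ref{thm:maximal-inequality}) is a maximal inequality for the density-limsup operator $\dls R_{F_n}$, not a standard maximal inequality along a sub-subsequence --- which is why the conclusion is convergence in density rather than full a.e.\ convergence, consistent with the negative results you correctly cite.
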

The proof is given in Section \ref{sec:groups-of-polynomial-growth}.
Thus there are cases, such as the discrete Heisenberg group, where
no sequence of balls satisfied the ratio ergodic theorem, but there
exist sequences along for which the density version is valid. Our
arguments are special to groups of polynomial growth but some parts
carry over also to groups of sub-exponential growth (groups with $|B_{n}|=o(c^{n})$
for all $c>1$). We do not know if and when similar modifications
of the ratio ergodic theorem hold in more general groups.

There are other possible weakenings of the ratio ergodic theorem.
One possibility~is to require a.e. pointwise convergence of the ratios
to other limit functions. This phenomenon has been recently observed
in certain algebraic settings involving {}``large'' groups acting
on infinite measure spaces, see e.g. the introduction of \cite{BowenNevo2012}.%
\footnote{I am grateful to Amos Nevo for drawing my attention to this phenomenon.%
} However, our negative results exclude this as well; in the proofs
we construct actions for which the ratios diverge. 

The rest of the paper is divided into two sections: In Section \ref{sec:Besicovitch-is-necessary}
we develop the necessary combinatorics and prove Theorems \ref{thm:subseq-balls},
\ref{thm:Z-infinity} and \ref{thm:main}. We prove Theorem \ref{thm:polynomial-density-thm}
in Section \ref{sec:groups-of-polynomial-growth}.

\section{\label{sec:Besicovitch-is-necessary}Besicovitch is necessary}

\subsection{\label{sub:Combinatorial-preliminaries.}Combinatorial preliminaries}

In this section $\{F_{n}\}$ denotes a sequence of finite subsets
of $G$, all containing the identity element $1_{G}$. We begin with
some combinatorial definitions. 

A collection $\{E_{i}\}_{i\in I}$ of subset of $G$ is said to have
multiplicity $k$ at a point $g$ if $g$ belongs to $k$ of the sets.
The multiplicity of $\{E_{i}\}$ is the smallest $k$ such that all
points have multiplicity $\leq k$. The\,following definition is
classical in analysis where, instead of translates of sets in a group,
one considers balls in a metric space. 
\begin{defn}
\label{def:Besicovitch}$\{F_{n}\}$ satisfies the Besicovitch covering
property (or, more concisely, $\{F_{n}\}$ is Besicovitch) if there
is a constant $C$ such that, for every finite $A\subseteq G$ and
any family of sets of the form $\{F_{n(g)}g\}_{g\in A}$ there is
a subset $A'\subseteq A$ such that $\{F_{n(g)}g\}_{g\in A'}$ covers
$A$ and has multiplicity $\leq C$; equivalently, 
\[
1_{A}\leq\sum_{g\in A'}1_{F_{n(g)}g}\leq C
\]

\end{defn}
It is easy to see that any finite sequence $\{F_{n}\}_{n=1}^{N}$
is Besicovitch and that $\{F_{n}\}_{n=1}^{\infty}$ is Besicovitch
if and only if $\{F_{n}\}_{n=n_{0}}^{\infty}$ is Besicovitch for
every $n_{0}$. In this section we rely primarily on the following
characterization of the Besicovitch property. Define a (right) incremental
sequence to be a finite sequence $(F_{n(i)}g_{i})_{i=1}^{k}$ such
that $g_{j}\notin\bigcup_{i<j}F_{n(i)}g_{i}$ and $n(1)\geq n(2)\geq\ldots\geq n(k)$.
\begin{prop}
\label{prop:high-multiplicity-incremental-sequences}If $\{F_{n}\}$
is not Besicovitch then for every $k$ there is an incremental sequence
of multiplicity $k$ (equivalently, with $1_{G}$ belonging $k$ members
of the sequence). If in addition $F_{n}$ are symmetric and increasing,
the converse holds. \end{prop}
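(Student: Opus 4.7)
\textbf{Plan for the proof of Proposition~\ref{prop:high-multiplicity-incremental-sequences}.}

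For the forward direction I use a greedy construction. Suppose $\{F_{n}\}$ fails the Besicovitch property, and fix $k$. Taking $C=k$ in the definition, we get a finite $A\subseteq G$ and an assignment $g\mapsto n(g)$ such that every subcover of $\{F_{n(g)}g\}_{g\in A}$ has multiplicity exceeding $k$. Now run the standard greedy procedure: let $g_{1}\in A$ maximize $n(g)$, then inductively let $g_{j+1}$ be a maximizer of $n(\cdot)$ over $A\setminus\bigcup_{i\le j}F_{n(g_{i})}g_{i}$, stopping once $A$ is covered. By construction the resulting $(F_{n(g_{i})}g_{i})$ is incremental and still covers $A$, so by our assumption some point $h\in G$ lies in at least $k+1$ of the chosen sets. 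Passing to the corresponding sub-sequence and right-translating by $h^{-1}$ (which preserves the incremental structure) produces an incremental sequence of length $k+1$ with $1_{G}$ in every member, which yields both formulations of multiplicity $\ge k$.

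For the converse, assume the $F_{n}$ are symmetric and increasing and suppose we have an incremental sequence $(F_{n(i)}g_{i})_{i=1}^{k}$ with $1_{G}\in F_{n(i)}g_{i}$ for each $i$, equivalently $g_{i}\in F_{n(i)}$. The crucial observation is that the sets of this family are essentially disjoint on their own centers: I claim $g_{j}\in F_{n(i)}g_{i}$ forces $j=i$. The case $j>i$ is ruled out immediately by the incremental property. For $j<i$, from $g_{j}g_{i}^{-1}\in F_{n(i)}$ we derive, by symmetry, $g_{i}g_{j}^{-1}\in F_{n(i)}$, and hence $g_{i}\in F_{n(i)}g_{j}\subseteq F_{n(j)}g_{j}$ by monotonicity (since $n(j)\ge n(i)$), contradicting the incremental condition applied to the pair $j<i$. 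Consequently each $F_{n(i)}g_{i}$ covers only $g_{i}$ among $A=\{g_{1},\ldots,g_{k}\}$, so the only subcover of $A$ by this family is the full family itself; and since every one of its $k$ sets contains $1_{G}$, the multiplicity at $1_{G}$ equals $k$. As $k$ is arbitrary, no finite constant $C$ witnesses the Besicovitch property.

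The greedy step in the forward direction is routine. The crux is the converse, where the tension between symmetry and monotonicity (which make the $F_{n}$ behave like balls) and the one-sided incremental condition conspires to promote incrementality into full disjointness on centers; without symmetry or without monotonicity this fails and the subcover could drastically shrink $A$. This rigidity is what forces the Besicovitch constant to grow with $k$.
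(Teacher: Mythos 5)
Your proposal is correct and follows essentially the same route as the paper: a greedy selection of an incremental subcover (using the standing assumption $1_{G}\in F_{n}$) plus a right translation for the forward direction, and for the converse the same symmetry-plus-monotonicity argument showing $g_{j}\notin F_{n(i)}g_{i}$ for all $i\neq j$, so that the only subcover of the centers is the full family. No gaps worth noting.
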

\begin{proof}
We include the standard proof for completeness. If $\{F_{n}\}$ is
not Besicovitch then, given $k$, there is a family $\{F_{n(i)}g_{i}\}_{i=1}^{\ell}$
such that any sub-collection covering all the $g_{i}$ is of multiplicity
$k$. We may assume $n(i)$ are non-increasing. Choose an incremental
subsequence $\{F_{n(i_{m})}g_{i_{m}}\}$ greedily: let $i_{1}=1$,
and if $i_{1},\ldots,i_{m}$ are defined take $i_{m+1}$ to be the
minimal $i>i_{m}$ satisfying $g_{i}\notin\bigcup_{j\leq m}F_{n(i_{j})}g_{i_{j}}$.
The resulting sequence covers all the $g_{i}$ (here we use $1_{G}\in F_{n}$),
thus has multiplicity $k$. We can assume the multiplicity is realized
at $1_{G}$ by applying an appropriate right translation to the sets.

In the other direction, given $k$ let $\{F_{n(i)}g_{i}\}$ be an
incremental sequence of multiplicity $k$. Thus $g_{i}\notin F_{n(j)}g_{j}$
for all $i>j$; by symmetry $g_{j}\notin F_{n(j)}^{-1}g_{i}=F_{n(j)}g_{i}$.
Since $n(j)\geq n(i)$ this shows that $g_{j}\notin F_{n(i)}g_{i}$
also for $i<j$. Thus this holds for all $i\neq j$, and the only
sub-collection of $\{F_{n(i)}g_{i}\}$ that covers all the $g_{i}$
is the full sequence, whose multiplicity is $k$. Since $k$ was arbitrary,
$\{F_{n}\}$ is not Besicovitch. 
\end{proof}
Our main interest is in sequences for which the Besicovitch property
fails. We require a slightly stronger property: 
\begin{defn}
\label{def:strongly-non-B}$\{F_{n}\}$ is strongly non-Besicovitch
if for every finite $E\subseteq G$ there is a finite set $E\subseteq\widetilde{E}\subseteq G$
with $1_{G}\in\widetilde{E}$ such that $\{\widetilde{E}F_{n}\}$
is not Besicovitch.
\end{defn}
We note two situations where this property holds: first, when no sequence
$F'_{n}\supseteq F_{n}$ is Besicovitch (take $\widetilde{E}=E\cup\{1_{G}\}$).
Second, if $G$ is finitely generated, $B_{n}$ are balls, and no
sub-sequence of balls $\{B_{n(i)}\}_{i=1}^{\infty}$ is Besicovitch
then every sub-sequence is strongly non-Besicovitch; indeed given
a finite set $E$ take $\widetilde{E}=B_{m}$, so that $\widetilde{E}B_{n}=B_{n+m}$.

The sets we consider later will also satisfy the following property,
which was already mentioned in the introduction:
\begin{defn}
\label{def:almost-central}$\{F_{n}\}$ is almost central if for every
$g\in G$ there is a finite set $E\subseteq G$ with $F_{n}g\subseteq EF_{n}$
for all $n$.
\end{defn}
The two primary examples are when $G$ is abelian, in which case we
can take $E=\{g\}$; and when $F_{n}=B_{k(n)}$ are balls, since then
if $g\in B_{m}$ then $F_{n}g\subseteq B_{m}F_{n}$. The latter example
shows that this property does not actually have any connection to
abelianness of the group, since it holds for balls in any finitely
generated group. It is clear that the definition is equivalent to
the statement that for every finite $E\subseteq G$ there is a finite
$E'\subseteq G$ with $F_{n}E\subseteq E'F_{n}$ for all $n$.

We now derive some properties of strongly non-Besicovitch and almost-central
sequences. 
\begin{lem}
\label{lem:strongly-non-B-consequences}Suppose that $\{F_{n}\}$
is strongly non-Besicovitch. For every finite $E\subseteq G$ there
is a finite $\widetilde{E}\subseteq G$ containing $E$, and such
that for every finite $H\subseteq G$ there are arbitrarily long incremental
sequences $\{F_{n(i)}g_{i}\}_{i=1}^{\ell}$ satisfying \end{lem}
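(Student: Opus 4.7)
The plan is to apply Proposition~\ref{prop:high-multiplicity-incremental-sequences} to the enlarged sequence $\{\widetilde{E}F_n\}$ provided by the strongly non-Besicovitch hypothesis, then translate the conclusion back to a statement about $\{F_n\}$, and finally thin the sequence by a pigeonhole argument to accommodate $H$.

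Concretely, given $E$, I would first invoke Definition~\ref{def:strongly-non-B} to obtain $\widetilde{E}\supseteq E$ with $1_G\in\widetilde{E}$ such that $\{\widetilde{E}F_n\}$ is not Besicovitch, and then fix this $\widetilde{E}$ for the rest of the argument---note that at this point $H$ has not yet been introduced, so $\widetilde{E}$ depends on $E$ alone. Next, given $H$ and a target length $\ell$, I would apply Proposition~\ref{prop:high-multiplicity-incremental-sequences} to $\{\widetilde{E}F_n\}$ to produce, for arbitrarily large $k$, an incremental sequence $(\widetilde{E}F_{n(i)}g_i)_{i=1}^{k}$ of multiplicity $k$ at $1_G$. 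Unpacking the definitions, this means $1_G\in\widetilde{E}F_{n(i)}g_i$ for every $i$, the exponents $n(i)$ are non-increasing, and $g_j\notin\bigcup_{i<j}\widetilde{E}F_{n(i)}g_i$. Since $1_G\in\widetilde{E}$ we have $F_{n(i)}g_i\subseteq\widetilde{E}F_{n(i)}g_i$, so a fortiori $g_j\notin\bigcup_{i<j}F_{n(i)}g_i$; hence the same sequence is also incremental for $\{F_n\}$, while retaining the strong conclusions that $\widetilde{E}F_{n(i)}g_i\ni 1_G$ and that later $g_j$ avoid the $\widetilde{E}$-expanded earlier sets.

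To incorporate $H$, I would extract a sub-sequence of indices $1\leq i_1<\cdots<i_\ell\leq k$ greedily so as to enforce the required $H$-dependent separation (e.g.\ pairwise disjointness of $Hg_{i_m}$, or $g_{i_m}\notin H\{g_{i_1},\ldots,g_{i_{m-1}}\}$, or $n(i_m)$ exceeding a threshold determined by $H$). Each greedy step rules out a number of candidates controlled by $|H|$ (or a polynomial in $|H|$), so taking $k\gg\ell\cdot|H|^{O(1)}$ leaves a surviving sub-sequence of length $\ell$. Crucially, sub-sequences of incremental sequences are themselves incremental---the non-increasing order on the $n(i)$ and the non-covering property both pass to any sub-sequence, as does the multiplicity at $1_G$, since every selected set still contains $1_G$.

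The main obstacle is really the quantifier ordering: because $\widetilde{E}$ is chosen before $H$, we cannot simply absorb $H$ into $\widetilde{E}$ and invoke strong non-Besicovitch again. Any $H$-dependence in the conclusion must instead arise by thinning a single long incremental sequence for the previously fixed $\{\widetilde{E}F_n\}$. Once one notices that the non-Besicovitch property of $\{\widetilde{E}F_n\}$ supplies arbitrarily long incremental sequences---i.e.\ an unbounded reservoir of raw material---this is a minor issue, but one must check carefully that the thinning simultaneously preserves the incremental property for $\{F_n\}$, the multiplicity of $\widetilde{E}F_{n(i)}g_i$ at $1_G$, and the $H$-separation. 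All of these are stable under subsets, so the bookkeeping goes through.
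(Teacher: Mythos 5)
Your overall strategy is the paper's: fix $\widetilde{E}$ once, via Definition \ref{def:strongly-non-B}, before $H$ enters; apply Proposition \ref{prop:high-multiplicity-incremental-sequences} to $\{\widetilde{E}F_{n}\}$ to get an arbitrarily long incremental sequence $(\widetilde{E}F_{n(i)}g_{i})$ all of whose members contain $1_{G}$; observe that (because $1_{G}\in\widetilde{E}$) the same data is incremental for $\{F_{n}\}$ while retaining the stronger avoidance $g_{j}\notin\widetilde{E}F_{n(i)}g_{i}$ and the intersection $1_{G}\in\widetilde{E}F_{n(i)}g_{i}$; and finally discard boundedly many terms to obtain the $H$-separation, exactly as the paper does (it removes the at most $|E^{-1}H|$ indices with $g_{j}\in E^{-1}H$, using that the $g_{j}$ are distinct, so one takes an initial sequence of length $\ell+|E^{-1}H|$ rather than your cruder $k\gg\ell|H|^{O(1)}$ -- a harmless difference). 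Your remark that incrementality, the containment of $1_{G}$, and the separation properties all pass to subsequences is correct and is what makes the thinning legitimate.

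The one genuine gap is the choice of the set to which Definition \ref{def:strongly-non-B} is applied. You apply it to $E$ itself, so you only know $\widetilde{E}\supseteq E$. But the conclusions of the lemma (the items following ``satisfying,'' cut off in the excerpt you saw) are: (a) $Eg_{i}\cap F_{n(j)}g_{j}=\emptyset$ and $Eg_{i}\cap Eg_{j}=\emptyset$ for $i>j$, (b) $Eg_{i}\cap H=\emptyset$, (c) $\widetilde{E}^{-1}\cap F_{n(i)}g_{i}\neq\emptyset$. To deduce (a) from the avoidance $g_{i}\notin\widetilde{E}F_{n(j)}g_{j}$ one needs $E^{-1}\subseteq\widetilde{E}$ (then $g_{i}\in E^{-1}F_{n(j)}g_{j}$ would contradict the avoidance, giving $Eg_{i}\cap F_{n(j)}g_{j}=\emptyset$) and in fact $E^{-1}E\subseteq\widetilde{E}$ together with $1_{G}\in F_{n(j)}$ (to rule out $g_{i}\in E^{-1}Eg_{j}$, giving $Eg_{i}\cap Eg_{j}=\emptyset$). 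With $\widetilde{E}\supseteq E$ only, the avoidance yields statements like $g_{i}\notin EF_{n(j)}g_{j}$, which do not imply disjointness of the $E$-translates. The paper therefore applies Definition \ref{def:strongly-non-B} to $E^{-1}E$ (after adding $1_{G}$ to $E$), so that $\widetilde{E}\supseteq E^{-1}E\supseteq E$; with this single adjustment your argument goes through, and (c) follows, as you note, from $1_{G}\in\widetilde{E}F_{n(i)}g_{i}$.
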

\begin{enumerate}
\item [(a)] $Eg_{i}\cap F_{n(j)}g_{j}=\emptyset$ and $Eg_{i}\cap Eg_{j}=\emptyset$
for $i>j$. 
\item [(b)] $Eg_{i}\cap H=\emptyset$ for all $i$.
\item [(c)] $\widetilde{E}^{-1}\cap F_{n(i)}g_{i}\neq\emptyset$ for all
$i$. \end{enumerate}
\begin{proof}
Let $E$ be given and assume without loss of generality that $1_{G}\in E$.
Let $\widetilde{E}$ be associated to $E^{-1}E$ as in Definition
\ref{def:strongly-non-B}. Let $H\subseteq G$ be finite, let $k=\ell+|E^{-1}H|$,
and, using Proposition \ref{prop:high-multiplicity-incremental-sequences},
choose an incremental sequence $\{\widetilde{E}F_{n(j)}g_{j}\}_{j=1}^{\ell}$
for $\{\widetilde{E}F_{n}\}$ with $1_{G}\in\bigcap\widetilde{E}F_{n(j)}g_{j}$. 

For $j<i$ we have $g_{i}\notin\widetilde{E}F_{n(j)}g_{j}$. Since
$1_{G}\in\widetilde{E}$ this implies $g_{i}\notin F_{n(j)}g_{j}$,
hence $\{F_{n(j)}g_{j}\}_{j=1}^{\ell}$ is incremental. 

For (a), let $i>j$. Then $g_{i}\notin\widetilde{E}F_{n(j)}g_{j}$
and $E^{-1}\subseteq\widetilde{E}$ imply $Eg_{i}\cap F_{n(j)}g_{j}=\emptyset$;
similarly $g_{i}\notin\widetilde{E}F_{n(j)}g_{j}$ and $1_{G}\in F_{n(i)}$,
together with the definition of $\widetilde{E}$, give $Eg_{i}\cap Eg_{j}=\emptyset$. 

For (b), note that since $1_{G}\in F_{n}$, by the incremental property,
all the $g_{j}$ are distinct. $Eg_{j}\cap H\neq\emptyset$ implies
$g_{j}\in E^{-1}H$, so after removing at most $|E^{-1}H|$ elements
of the sequence we are left with an incremental sequence of length
$\ell$ which, in addition to the above, satisfies (b).

(c) follows from $1_{G}\in\widetilde{E}F_{n(i)}g_{i}$.\end{proof}
\begin{lem}
\label{cor:strongly-non-B-almost-central-consequences} Suppose that
$\{F_{n}\}$ is strongly non-Besicovitch and almost central. Then
for every finite $D,E\subseteq G$ there exists a finite $H\subseteq G$,
with $D\subseteq H$, satisfying the following property: for every
$\ell$ there is an incremental sequence $\{F_{n(i)}g_{i}\}_{i=1}^{\ell}$
such that
\begin{enumerate}
\item [(i)]$Eg_{i}\cap F_{n(j)}eg_{j}=\emptyset$ and $Eg_{i}\cap Eg_{j}=\emptyset$
for all $i\neq j$ and $e\in E$.
\item [(ii)]$Eg_{i}\cap H=\emptyset$ for all $i$.
\item [(iii)]$F_{n(i)}eg_{i}\cap H\neq\emptyset$ for every $i$ and $e\in E$.
\end{enumerate}
\end{lem}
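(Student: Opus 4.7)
The plan is to deduce the statement from Lemma \ref{lem:strongly-non-B-consequences} by enlarging the set $E$ to a suitable finite set $E^{\ast}$ and choosing $H$ carefully, both built using the almost-central property. The observation driving everything is that condition (i) strengthens the earlier (a) by inserting an extra factor $e\in E$ into $F_{n(j)}eg_j$, and condition (iii) strengthens (c) in exactly the same way; both strengthenings should be absorbed using the inclusions that almost-centrality provides.

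Concretely, using almost-centrality in the equivalent form ``$F_n D\subseteq E_D F_n$ for finite $D$'', applied to $D=E$ and to $D=E^{-1}$, fix finite sets $A,B\subseteq G$ with $F_n e\subseteq A F_n$ and $F_n\subseteq B F_n e$ for every $n$ and every $e\in E$. Set $E^{\ast}:=\{1_G\}\cup E\cup E^{-1}\cup A\cup A^{-1}E$ and let $\widetilde{E^{\ast}}$ be the set that Lemma \ref{lem:strongly-non-B-consequences} produces from $E^{\ast}$. Define $H:=D\cup B^{-1}\widetilde{E^{\ast}}^{-1}$; it is finite and contains $D$. For each $\ell$, apply Lemma \ref{lem:strongly-non-B-consequences} with $E^{\ast}$, $H$, and length $\ell$ to extract an incremental sequence $\{F_{n(i)}g_i\}_{i=1}^{\ell}$ whose properties (a)--(c) (with $E^{\ast}$ in place of $E$) form the starting point for what follows.

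Property (ii) is immediate from (b) because $E\subseteq E^{\ast}$. For property (iii), pick $\xi_i\in\widetilde{E^{\ast}}^{-1}\cap F_{n(i)}g_i$ from (c); since $F_{n(i)}g_i\subseteq BF_{n(i)}eg_i$, write $\xi_i=bz$ with $b\in B$ and $z\in F_{n(i)}eg_i$, whence $z=b^{-1}\xi_i\in B^{-1}\widetilde{E^{\ast}}^{-1}\subseteq H$ and so $F_{n(i)}eg_i\cap H\ne\emptyset$. For (i), the disjointness $Eg_i\cap Eg_j=\emptyset$ follows from the (symmetric) conclusion of (a) applied to $E\subseteq E^{\ast}$. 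The ``forward'' case $i>j$ of $Eg_i\cap F_{n(j)}eg_j=\emptyset$ follows from $F_{n(j)}eg_j\subseteq AF_{n(j)}g_j$: a point of $Eg_i$ in this set would yield a point of $A^{-1}Eg_i\subseteq E^{\ast}g_i$ inside $F_{n(j)}g_j$, contradicting (a).

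The main obstacle is the ``backward'' case $i<j$ of the first half of (i), where $n(j)\le n(i)$. Almost-centrality alone does not suffice here; one must exploit the fact that the sequence was extracted from $\{\widetilde{E^{\ast}}F_n\}$ and hence satisfies the stronger incremental relation $g_j\notin\widetilde{E^{\ast}}F_{n(i)}g_i$. Rewriting a hypothetical coincidence $e_0g_i=feg_j$ as $g_jg_i^{-1}=e^{-1}f^{-1}e_0\in E^{-1}F_{n(j)}^{-1}E$ reduces the problem to verifying $E^{-1}F_{n(j)}^{-1}E\subseteq\widetilde{E^{\ast}}F_{n(i)}$, which is where the enlargements $E^{\ast}\supseteq E^{-1}\cup A$ and correspondingly $\widetilde{E^{\ast}}\supseteq (E^{\ast})^{-1}E^{\ast}$ become essential, together with whatever control on $F_{n(j)}^{-1}$ in terms of $F_{n(i)}$ one has in context (e.g.\ the nesting $F_{n(j)}^{-1}\subseteq F_{n(i)}$ available for symmetric, increasing families such as balls in a finitely generated group, which is the main setting envisaged).
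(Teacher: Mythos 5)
Most of what you write is correct and is essentially the paper's own argument in different notation: your $A$, $B$ and $E^{*}$ play the roles of the paper's $E'$ (chosen with $E'F_{n}\supseteq F_{n}(E\cup E^{-1})$) and $E''=(E')^{-1}E$, your $H=D\cup B^{-1}(\widetilde{E^{*}})^{-1}$ matches the paper's $H=D(E')^{-1}(\widetilde{E''})^{-1}$, and your derivations of (ii), of (iii) via $F_{n}\subseteq BF_{n}e$, of $Eg_{i}\cap Eg_{j}=\emptyset$, and of the case $i>j$ of the first half of (i) via $F_{n(j)}e\subseteq AF_{n(j)}$ all go through and coincide with the paper's steps.

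But as submitted this is not a proof of the lemma: the case $i<j$ of $Eg_{i}\cap F_{n(j)}eg_{j}=\emptyset$ is left open, and it is genuinely needed --- in the construction proving Theorem \ref{thm:B-is-necessary}, each $\gamma_{j}$ is paired with its own radius $k_{j}$, so both orderings of a pair occur. Your proposed reduction to $E^{-1}F_{n(j)}^{-1}E\subseteq\widetilde{E^{*}}F_{n(i)}$ cannot be closed by enlarging finite sets: since $n(j)\le n(i)$, it requires dominating $F_{n(j)}$ by $F_{n(i)}$, i.e.\ nestedness of the sequence, which is not among the lemma's hypotheses, and your appeal to the ``main setting envisaged'' (balls) is not available in, e.g., the $\mathbb{Z}^{\infty}$ application, where the sets are merely symmetric and contain the identity. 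The ingredient your write-up never uses, and which the paper's proof turns on, is the symmetry of the $F_{n}$ (the standing assumption of Section \ref{sub:Necessity}, where the lemma is applied): from $Eg_{i}\cap F_{n(j)}Eg_{j}=\emptyset$ for $i>j$, which you already have, symmetry of $F_{n(j)}$ lets one invert the $F$-factor and rewrite the relation as $F_{n(j)}Eg_{i}\cap Eg_{j}=\emptyset$, so the reversed instances are obtained with the larger of the two radii $n(\min(i,j))$; to attach to each $g_{j}$ its own radius the paper's bookkeeping at this point also uses $n(i)\le n(j)$ together with monotonicity of the sets, exactly as in the converse half of Proposition \ref{prop:high-multiplicity-incremental-sequences} (where ``symmetric and increasing'' is assumed explicitly). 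So you have correctly isolated the delicate step, but to have a proof you must carry out this symmetry/ordering argument (or supply a substitute); as it stands, (i) for $i<j$ is a genuine gap.
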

\begin{proof}
Let $D,E$ be given, without loss of generality $1_{G}\in D\cap E$.
Using almost centrality let $E'$ be such that $E'F_{n}\supseteq F_{n}(E\cup E^{-1})$,
and assume $1_{G}\in E'$ (otherwise just add $1_{G}$ to it). Let
$E''=(E')^{-1}E$, let $\widetilde{E''}$ be as in the previous lemma,
and apply the previous lemma to $H=D(E')^{-1}(\widetilde{E''})^{-1}$.
We obtain arbitrarily long incremental sequences $\{F_{n(i)}g_{i}\}$
satisfying (a)--(c). Property (ii) is just (b).

For (i), we already know that $E''g_{i}\cap F_{n(j)}g_{j}=\emptyset$
for $i>j$. Using the definition of $E''$ this gives $Eg_{i}\cap F_{n(i)}Eg_{j}=\emptyset$
for $i>j$. This is the same as $F_{n(i)}^{-1}Eg_{i}\cap Eg_{j}=\emptyset$
for $i>j$, which, by symmetry of $F_{n(i)}$, is just $F_{n(i)}Eg_{i}\cap Eg_{j}=\emptyset$
for $i>j$. Thus this relation holds for all $i\neq j$. (i) follows
using $1_{G}\in E\cap F_{n(i)}$.

For (iii), by choice of $E'$ for every $e\in E$ we have $F_{n(i)}e^{-1}\subseteq E'F_{n(i)}$,
hence $F_{n(i)}\subseteq E'F_{n(i)}e$. Combined with (c) this implies
that $(\widetilde{E''})^{-1}\cap E'F_{n(i)}eg_{i}\neq\emptyset$,
hence $(E')^{-1}(\widetilde{E''})^{-1}\cap F_{n(i)}eg_{i}\neq\emptyset$.
(iii) follows since $H\supseteq(E')^{-1}\widetilde{(E''})^{-1}$.
\end{proof}

\subsection{\label{sub:Necessity}Necessity}

In this section we add the assumption that the sets $F_{n}$ are symmetric,
and continue to assume $1_{G}\in F_{n}$. It will be convenient to
write $S_{F}(\varphi,x)$ and $R_{F}(\varphi,\psi,x)$ instead of
$S_{F}(\varphi)(x)$, $R_{F}(\varphi,\psi)(x)$. 
\begin{thm}
\label{thm:B-is-necessary}If $\{F_{n}\}$ is strongly non-Besicovitch
and almost central then there is an ergodic measure-preserving action
of $G$ on a non-atomic measures space $(X,\mathcal{B},\mu)$, and
functions $\varphi,\psi\in L^{1}(\mu)$ with $\int\psi\neq0$, such
that $R_{F_{n}}(\varphi,\psi)$ diverges a.e. as $n\rightarrow\infty$.
\end{thm}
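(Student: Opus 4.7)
The plan is to construct the required action and functions directly, turning the combinatorial obstruction of Lemma \ref{cor:strongly-non-B-almost-central-consequences} into pointwise divergence of the ratio averages. The argument has three stages: a combinatorial pigeonholing, a dynamical pasting, and a Borel--Cantelli conclusion.

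First I would extract a \emph{maximal-spike} consequence of Lemma \ref{cor:strongly-non-B-almost-central-consequences}. Applying it with fixed $D, E$ (say $E = \{1_G\}$, to be enlarged) and an arbitrary length $\ell$, one obtains an incremental sequence $\{F_{n(i)} g_i\}_{i=1}^\ell$ and a fixed finite reservoir $H \subset G$ satisfying (i)--(iii). Property (iii) forces each of the $\ell \cdot |E|$ sets $F_{n(i)} e g_i$ to meet $H$, and since $|H|$ is bounded independently of $\ell$, pigeonhole produces some $h^\ast \in H$ belonging to at least $M \approx \ell |E|/|H|$ of these sets. Thus at the single point $h^\ast$ one has a family of scales $n(i_1) > n(i_2) > \cdots > n(i_M)$ and launch points $e_s g_{i_s}$ (pairwise distinct with pairwise $E$-disjoint neighborhoods, by (i)) with $h^\ast \in F_{n(i_s)} e_s g_{i_s}$ for each $s$. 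Because the scales are non-increasing, the ``multiplicity accumulates'' as the scale grows: writing $B_0 = \bigcup_s \{e_s g_{i_s}\}$, the sum $S_{F_{n(i_1)}}(1_{B_0}, h^\ast)$ is of order $M$ but $S_{F_{n(i_M)}}(1_{B_0}, h^\ast)$ is only of order one, while the $L^1$-mass of $1_{B_0}$ is just $M$. This is precisely the sort of fluctuating-ratio obstruction the theorem needs.

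Second I would realize these patterns inside a genuine ergodic $G$-action. Starting from a free ergodic probability-preserving action $G \curvearrowright (Y, \nu)$ (available for every countable group), I would form a $\sigma$-finite ergodic infinite-measure extension $(X, \mathcal{B}, \mu)$ and inductively carve out disjoint ``gadget'' sets $X_k \subset X$, each locally modelling the combinatorial configuration of Step~1 with parameter $M_k \to \infty$. One then defines
\[
\varphi = \sum_k c_k \, 1_{A_k}, \qquad \psi = \sum_k c_k \, 1_{B_k},
\]
where $A_k, B_k \subset X_k$ are translates of the reservoir $H$ and of the launch-pad set $B_0$ along generic orbits, and the weights $c_k \downarrow 0$ rapidly enough to keep $\varphi, \psi \in L^1(\mu)$ with $\int \psi \neq 0$. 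On a positive-$\mu$-measure subset of each $X_k$, the ratios $R_{F_{n(i_s)}}(\varphi, \psi, x)$ sweep across values ranging from $\sim M_k$ down to $\sim 1$ as $s$ varies, so $\sup_s R_{F_{n(i_s)}}(\varphi, \psi, x)$ exceeds any prescribed bound once $k$ is large. A Borel--Cantelli argument together with ergodicity then produces a full-measure set on which $\limsup_n R_{F_n}(\varphi, \psi, x) = \infty$; since the ratios dip back to order one in between the spikes, no convergent limit can exist.

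The principal obstacle is the second stage. The combinatorial gadget is a left-multiplication pattern in $G$, whereas the dynamical averages involve the left action on $(X, \mu)$; converting one into the other uses freeness of the ambient action, but preserving ergodicity and controlling all the scales globally is delicate. Since $G$ is not assumed amenable, classical Rokhlin-type tower constructions are unavailable, and the gadgets must be embedded directly using generic orbits of the base action. The almost-central hypothesis enters here exactly to guarantee that the multiplicity pattern, which involves both $F_n$-shifts and $E$-translations, can be realized simultaneously for the averaging sets $\{F_n\}$ along a.e.\ orbit of $X$.
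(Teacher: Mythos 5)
Your first stage inverts the mechanism that the theorem actually requires, and this is a genuine gap rather than a presentational one. The pigeonhole at a single $h^{*}\in H$ produces one group element whose averages at the largest scale catch many of the launch points $e_{s}g_{i_{s}}$; but that is essentially a restatement of the failure of the Besicovitch property (high multiplicity at one point), i.e.\ at best the germ of a violation of the weak-type maximal inequality. A maximal-inequality failure does not by itself defeat a.e.\ convergence of the ratios -- the introduction of the paper stresses exactly this non-equivalence -- and the theorem needs divergence at \emph{almost every} point. The obstruction one must exploit is the transpose of your pigeonhole: property (iii) of Lemma \ref{cor:strongly-non-B-almost-central-consequences} says that \emph{every} set $F_{n(i)}eg_{i}$ meets the fixed reservoir $H$. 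So if the bulk of the measure is spread over the many disjoint translates $Eg_{i}$ (obtained by cutting the current stack into $N$ thin copies) and a huge value $v$ of $\varphi$ is placed on the sites of $H$ (with $\psi=0$ there), then the spike is seen by the average $S_{F_{k_{j}}}(\varphi,\cdot)$ based at every point of the bulk, while its $L^{1}$ cost is of order $|H|\,v\,r_{n}/N$ and hence negligible for large $N$. In your arrangement the fluctuation is witnessed only at the copy of the single site $h^{*}$, a set whose measure is a vanishing fraction of the gadget; consequently in your third stage the measures of the witnessing sets are summable and Borel--Cantelli yields that a.e.\ point sees only \emph{finitely} many spikes -- the opposite of what you need, and ergodicity cannot repair this since $\varphi,\psi$ are fixed and the spikes are not recurrent along a.e.\ orbit at the relevant scales.

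Your second stage is also not carried out: embedding prescribed finite translation patterns at all scales simultaneously inside a pre-existing free action is precisely the hard step, and, as you note yourself, without amenability there is no Rokhlin/tiling machinery to do it; the paper avoids the issue entirely by \emph{constructing} the action by cutting and stacking, building the partial actions $T_{n}$, the functions $\varphi,\psi$, and the scale functions $i_{n}(x)$ simultaneously, and alternating the sign of the spike so that $R_{F_{i_{n}(x)}}(\varphi,\psi,x)$ is forced $\geq1$ at odd stages and $\leq-1$ at even ones at every point of every earlier stage (ergodicity is then arranged by interleaved mixing steps). If you want to salvage your outline, drop the pigeonhole, use (iii) directly so that the tiny-mass set $H$ influences the averages at all launch sites, and build the space by cutting and stacking rather than inside a given action; with nonnegative $\varphi$ alone you would also need to ensure the ratios genuinely dip back down between spikes, which the paper gets for free from the alternating signs.
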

Theorem \ref{thm:main} is then a formal consequence of Theorem \ref{thm:B-is-necessary},
since if $\{F_{n}\}$ is not strongly non-Besicovitch then $\{EF_{n}\}$
is Besicovitch for some finite set $E$.

The construction that is the proof of Theorem \ref{thm:B-is-necessary}
proceeds by cutting and stacking. We give full details below, but
let us first give an informal overview for readers familiar with the
method. Suppose we have defined a large {}``stack'' whose shape
a finite set $E\subseteq G$, and a pair of real-valued functions
$\varphi,\psi>0$ with $\left\Vert \varphi\right\Vert _{1},\left\Vert \psi\right\Vert _{1}<\infty$,
corresponding to an $\mathbb{R}$-coloring of $G_{0}$. Applying the
corollary to $E$ and $D=E$ we obtain a set $H\supseteq E$, and,
fixing a large $N$, an incremental sequence $\{F_{m(i)}\gamma_{i}\}_{i=1}^{N}$
with the associated properties. Now, cut the original stack into $N$
copies of equal mass, and translate them to $E\gamma_{i}$, $i=1,\ldots,N$,
which by the corollary are pairwise disjoint and disjoint from $H$.
Add new mass to the sites corresponding to $H$ (which is empty) in
the new stack, and on it define $\varphi$ to take very large negative
value $v$, and define $\psi$ to be zero there. Also add new mass
where necessary in $\bigcup_{i=1}^{N}F_{m(i)}E\gamma_{i}$, defining
$\varphi,\psi$ to be $0$ there. If $v$ is negative enough in a
manner depending only on the original stack, this forces the ratios
over $F_{n(i)}e\gamma_{i}$ for $e\in E$ to be $\leq-1$; but the
total change to $\left\Vert \varphi\right\Vert $ is $|H|v/N$, which
can be made arbitrarily small by choosing $N$ large. Iterating this
procedure, we can cause the ratios at the points corresponding to
the original $E$ to fluctuate between $\geq1$ and $\leq-1$, and
in the limit we obtain the desired counterexample.

We now carry this plan out in more detail. First we describe the cutting-and-stacking
scheme in the group context. Fix in advance the measure space $(\mathbb{R},Lebesgue)$.
We will define a compatible sequence of partial actions $T_{n}$.
By this we mean that: (i) for every $g$ we define a sequence of maps
$T_{n}^{g}$, $n=1,2,\ldots$ with increasing domains $X_{n,g}\subseteq\mathbb{R}$
and which extend each other in the sense that $T_{n+1}^{g}|_{X_{n,g}}=T_{n}^{g}$;
(ii) for $x\in X$, if both the expressions $T_{n}^{h}(T_{n}^{g}x)$
and $T_{n}^{hg}x$ are well defined (that is, if $x\in X_{n,g}$,
$T_{n}^{g}x\in X_{n,h}$, and $x\in X_{n,hg}$), then they are equal;
and (iii) writing $X_{n}=\bigcup_{g\in G}X_{n,g}$, for every $x\in X_{n}$
and every $g\in G$ we have $x\in X_{m,g}$ for some $m\geq n$. It
is clear that this defines in the limit an action of $G$ on $X=\bigcup_{n}X_{n}$
given by $T^{g}x=\lim_{n\rightarrow\infty}T_{n}^{g}x$. At the same
time, we will define $\varphi_{n},\psi_{n}:X_{n}\rightarrow X$ in
a compatible way, giving functions $\varphi,\psi:X\rightarrow\mathbb{R}$
in the limit.

The formulation above is somewhat unwieldy and the construction itself
will take \,the following form. At each stage $n$ we will have defined
a finite set $G_{n}\subseteq G$ and to each $g\in G$ associated
an interval $I_{n,g}=[a_{n,g},b_{n,g})\subseteq\mathbb{R}$ of length
$r_{n}>0$, independent of $g$, and with $I_{n,g}\cap I_{n,h}=\emptyset$
for $g\neq h$. For $x\in I_{n,g}$ the map $T_{n}^{h}$ is defined
if $hg\in G_{n}$, in which case $T_{n}^{h}x\in I_{n,hg}$ is the
point $a_{n,hg,}+(a_{n,g}-x)$ that occupies the same position in
$I_{n,hg}$ as $x$ occupies in $I_{n,g}$. Thus $X_{n}=\bigcup_{g\in G_{n}}I_{n,g}$
and $X_{n,h}=\{x\in X_{n}\,:\, x\in I_{n,g}$ and $hg\in X_{n}\}$.
What we have said so far ensures that (ii) is satisfied. To ensure
properties (i) and (iii) we first describe the transition from stage
$n$ to $n+1$, which is by {}``cutting and translating''. Given
$G_{n},\{I_{n,g}\}_{g\in G_{n}}$ for some $n$, we first choose a
large $N$ and choose elements $\gamma_{1},\ldots,\gamma_{N}$ of
$G$ such that the sets $G_{n}\gamma_{i}$ are pairwise disjoint.
Now partition each interval $I_{n,g}$ into $N$ intervals of equal
length 
\[
r_{n+1}=r_{n}/N
\]
Ordering these intervals from left to right, set $I_{n+1,g\gamma_{i}}$
to be the $i$-th sub-interval. 

We have so far defined intervals for $g\in G'_{n+1}=\bigcup_{i=1}^{N}G_{i}\gamma_{i}$,
and one may verify that the compatibility condition (i) holds. To
ensure (iii), fix a sequence $\Gamma_{n}\subseteq G$ of finite subsets
increasing to $G$ with $1\in\Gamma_{1}$, and define $G_{n+1}$ to
be any finite set containing $\Gamma_{n}G'_{n+1}$; to the new points
$g\in G_{n+1}\setminus G'_{n+1}$ assign arbitrary pairwise disjoint
intervals $I_{n+1,g}\subseteq\mathbb{R}\setminus X_{n}$. 

We will define by induction $G_{n},\{I_{n,g}\}_{g\in G_{n}}$ as above
with associated partial action $T_{n}$, and bounded functions $\varphi,\psi:X_{n}\rightarrow\mathbb{R}$
with $\left\Vert \varphi\right\Vert _{1},\left\Vert \psi\right\Vert _{1}<2$.
Furthermore we will have bounded functions $i_{n}:X_{n-1}\rightarrow\mathbb{N}$
such that for every $x\in[0,1]$ the maps $T_{n}^{g}x$ are defined
for all $g\in F_{i_{n}(x)},$ and 
\begin{eqnarray*}
R_{i_{n}(x)}(\varphi,\psi,x) &  & \left\{ \begin{array}{cc}
\geq1 & n\mbox{ odd}\\
\leq-1 & n\mbox{ even}
\end{array}\right.
\end{eqnarray*}
where we define $R_{n}(\varphi,\psi)$ as before in terms of the partial
action $T_{n}$. We also will ensure that $i_{n}(x)\rightarrow\infty$
for $x\in_{k=1}^{\infty}\bigcup X_{k}$. Assuming all this, it is
clear that, for the action $T$ defined in the limit, for every $k$
the ratios $R_{n}(\varphi,\psi,x)$ diverges for $x\in X_{k}$, and
hence diverge everywhere on $X=\bigcup X_{n}$. One point we have
not touched on is ergodicity of the limit action, we will come back
to this below.

It remains to describe the construction. At the first step we set
$G_{1}=\{1_{G}\}$, $I_{1,1_{G}}=[0,1]$, so $X_{1}=[0,1]$; define
$\varphi,\psi$ and $i_{1}$ to be identically $1$. Then all the
requisite properties hold.

Now suppose for some $n$ we have defined $G_{n},\{I_{n,g}\}_{g\in G_{n}}$,
$r_{n}$, $\varphi,\psi$, and $i_{n}$ as above. For simplicity we
assume $n$ is even, the odd case being the same. Let $i_{n}^{*}=\sup_{x\in X_{n-1}}i_{n}(x)<\infty$
and 
\begin{eqnarray*}
\Phi_{n} & = & \sup_{x\in[0,1]}|S_{F_{i_{n}(x)}}(\varphi,x)|\\
\Psi_{n} & = & \sup_{x\in[0,1]}|S_{F_{i_{n}(x)}}(\psi,x)|
\end{eqnarray*}
and choose 
\[
v=\Phi_{n}+\Psi_{n}
\]
so that $(v-\Phi_{n})/\Psi_{n}=1$. 

Let $H$ be the set associated to $D=E=G_{n}$ in Lemma \ref{lem:strongly-non-B-consequences}.
Choose $N$ large enough that 
\[
|H|\cdot v\cdot r_{n}/N<2-\int_{X_{n}}\varphi
\]
Applying the lemma, choose elements $\gamma_{1},\ldots,\gamma_{N}$
and indices $k_{1},\ldots,k_{N}>i_{n}^{*}$ such that 
\begin{eqnarray}
H\cap F_{k_{j}}g\gamma_{j} & \neq & \emptyset\qquad\mbox{for all }j\mbox{ and }g\in G_{n}\label{eq:d}\\
H\cap G_{n}\gamma_{j} & = & \emptyset\qquad\mbox{for all }j\label{eq:a}\\
G_{n}\gamma_{j}\cap F_{k_{j'}}g\gamma_{j'} & = & \emptyset\qquad\mbox{for }j\neq j'\mbox{ and }g\in G_{n}\label{eq:b}\\
G_{n}\gamma_{j}\cap G_{n}\gamma_{j'} & = & \emptyset\qquad\mbox{for }j\neq j'\label{eq:c}
\end{eqnarray}
 Let 
\[
G_{n+1}=\Gamma_{n+1}\left((\bigcup_{j=1}^{N}F_{k_{j}}G_{n-1}\gamma_{i})\cup H\right)
\]
Assign intervals of length $r_{n+1}=r_{n}/N$ to the elements of $G_{n+1}$
as follows: First partition each $I_{n,g}$ into $N$ intervals of
length $r_{n+1}$ and for $g\in G_{n}$ assign to $h=g\gamma_{j}$
the $j$-th sub-interval of $I_{n,g}$, which we call $I_{n+1,h}$.
So far there are no conflicts by (\ref{eq:c}) and the assignment
consists of disjoint intervals. To the remaining elements $h\in G_{n+1}\setminus\bigcup_{j=1}^{N}G_{n}\gamma_{j}$
associate arbitrary pairwise disjoint intervals $I_{n+1,h}\subseteq\mathbb{R}\setminus X_{n}$,
ensuring that the entire family $\{I_{n+1,g}\}_{g\in G_{n+1}}$ is
pairwise disjoint. This can easily be done since $X_{n}\subseteq\mathbb{R}$
is bounded.

For $x\in X_{n}\cap I_{n,g\gamma_{j}}$ for some $g\in G_{n}$, define
$i_{n+1}(x)=k_{j}$. Again, this is well defined by (\ref{eq:c}).

On $\bigcup_{h\in H}I_{n+1,h}$ set $\psi\equiv0$ and $\varphi\equiv v$.
There are no conflicts with previous definitions because of (\ref{eq:a}).

On the remaining mass, define $\varphi\equiv\psi\equiv0$ for $h\in H$.
There are no conflicts by (\ref{eq:b}).

Finally, in order to verify that $R_{i_{n+1}(x)}(\varphi,\psi,x)\geq1$
for $x\in X_{n-1}$, note that, by (\ref{eq:b}), if $x\in I_{n,g\gamma_{j}}$
for $g\in G_{n}$ then 
\begin{eqnarray*}
S_{F_{i_{n+1}(x)}}(\varphi,x) & = & S_{F_{i_{n}(x)}}(\varphi,x)+v\cdot|H\cap F_{i_{n+1}(x)}g\gamma_{j}|\\
S_{F_{i_{n+1}(x)}}(\psi,x) & = & S_{F_{i_{n}(x)}}(\psi,x)
\end{eqnarray*}
hence, by choice of $v$ and (\ref{eq:d}), we have $R_{F_{i_{n+1}(x)}}(\varphi,\psi,x)\geq1$.

While this construction ensures that the ratios diverge on a positive
fraction of the mass of a positive fraction of the ergodic component
of the action, these ergodic components may, a-priori, be atomic,
whereas we require divergence of the ratios on a non-atomic space.
The easiest solution is to introduce an intermediate step between
the stages of the construction, during which we create a large stack
with disjoint but very randomly placed copies of the previous stacks.
It is standard to show that the resulting action is ergodic, and the
new intermediate steps do not interfere with the construction above.
We omit the details.

\subsection{\label{sub:Derivation-of-main-results}Balls in finitely generated
groups.}

Let $G$ be a finitely generated and $B_{n}$ balls with respect to
some symmetric generating set. These are symmetric and contain the
identity and, as noted in the introduction, $\{B_{n}\}$ are almost
central. As noted in Section \ref{sub:Combinatorial-preliminaries.},
if no infinite subsequence $\{B_{n(i)}\}_{i=1}^{\infty}$ is Besicovitch
then every such subsequence is strongly non-Besicovitch. This and
Theorem Theorem \ref{thm:subseq-balls}.

The application to the Heisenberg group mentioned after Theorem \ref{thm:subseq-balls}
follows from:
\begin{prop}
\label{prop:Heisenberg}Let 
\[
G=\left\{ \left(\begin{array}{ccc}
1 & k & m\\
0 & 1 & n\\
0 & 0 & 1
\end{array}\right)\,\left|\begin{array}{c}
\\
k,m,n\in\mathbb{Z}\\
\\
\end{array}\right.\right\} 
\]
denote the discrete Heisenberg group with generating set 
\[
\{a^{\pm},b^{\pm}\}=\left\{ \left(\begin{array}{ccc}
1 & \pm1 & 0\\
0 & 1 & 0\\
0 & 0 & 1
\end{array}\right),\left(\begin{array}{ccc}
1 & 0 & 0\\
0 & 1 & \pm1\\
0 & 0 & 1
\end{array}\right)\right\} 
\]
Let $\{B_{n}\}$ be the associated sequence of balls. Then every infinite
subsequence $\{B_{n(k)}\}$ is non-Besicovitch.\end{prop}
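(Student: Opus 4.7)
The plan is to apply Proposition~\ref{prop:high-multiplicity-incremental-sequences}. Since each $B_{n}$ is symmetric, contains $1_{G}$, and the sub-sequence is increasing, $\{B_{n(k)}\}$ is non-Besicovitch provided that for every $M$ one can exhibit an incremental sequence drawn from $\{B_{n(k)}\}$ with $1_{G}$ belonging to $M$ of its members. So, given $M$, my strategy is to choose $M$ radii $N_{1}>N_{2}>\cdots>N_{M}$ from $\{n(k)\}$ that are widely separated (each $N_{i+1}$ much smaller than $N_{i}$) and then to construct $g_{1},\ldots,g_{M}\in G$ with $|g_{i}|\leq N_{i}$ (so that $1_{G}\in B_{N_{i}}g_{i}$) and with $d(g_{j},g_{i})>N_{i}$ whenever $j>i$ (the incremental condition).

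The construction uses the Heisenberg multiplication rule in normal form $g=a^{p}b^{q}c^{r}$, namely $(p_{1},q_{1},r_{1})(p_{2},q_{2},r_{2})=(p_{1}+p_{2},\,q_{1}+q_{2},\,r_{1}+r_{2}-p_{2}q_{1})$, together with the word-length asymptotic $|a^{p}b^{q}c^{r}|\asymp|p|+|q|+\sqrt{\max(0,|r|-|pq|)}$, so that in particular $c$ is central with $|c^{n^{2}}|\asymp n$. At each scale $N_{i}$ the sphere of length-$N_{i}$ elements offers many ``directions'': horizontal axes $\pm a^{N_{i}},\pm b^{N_{i}}$, pure central elements $\pm c^{N_{i}^{2}}$, and a large family of skewed mixtures such as $b^{N_{i}/3}c^{\pm 4N_{i}^{2}/9}$ or $a^{N_{i}/3}c^{\pm 4N_{i}^{2}/9}$. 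At each scale I would assign $g_{i}$ a direction not yet used at earlier (larger) scales, so that $g_{j}$ (with $|g_{j}|\leq N_{j}\ll N_{i}$) ends up in the region of $B_{N_{j}}$ lying outside $B_{N_{i}}g_{i}$. The incremental check $d(g_{j},g_{i})>N_{i}$ reduces, via the multiplication formula, to showing that $g_{j}g_{i}^{-1}$ has word length greater than $N_{i}$; the length asymptotic then converts this into an algebraic inequality involving the chosen coefficients, which holds provided $N_{j}$ is sufficiently small compared to $N_{i}$.

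The main obstacle is maintaining the incremental condition across all $O(M^{2})$ pairs of scales simultaneously. Here the key input is that the discrete Heisenberg group has polynomial growth of degree $4$ (exceeding its topological dimension $3$), so the sphere of radius $N_{i}$ contains $\asymp N_{i}^{3}$ lattice points, supplying an abundance of admissible directions at every scale. Inductively allocating a new direction at each step and using the wide separation of the $N_{i}$'s to absorb error terms from the commutator $-p_{2}q_{1}$, the construction proceeds through all $M$ steps and produces an incremental sequence of multiplicity $M$ at $1_{G}$. Since $M$ was arbitrary, Proposition~\ref{prop:high-multiplicity-incremental-sequences} yields that $\{B_{n(k)}\}$ is non-Besicovitch.
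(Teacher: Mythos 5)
Your combinatorial reduction is the same as the paper's and is fine: by Proposition \ref{prop:high-multiplicity-incremental-sequences}, since the balls are symmetric and the subsequence is increasing, it suffices to produce, for each $M$, radii $N_{1}>\cdots>N_{M}$ from $\{n(k)\}$ and points $g_{i}$ with $|g_{i}|\leq N_{i}$ and $|g_{j}g_{i}^{-1}|>N_{i}$ for $j>i$. The gap is in the step that actually produces the $g_{i}$. The mechanism you invoke --- the sphere of radius $N_{i}$ has $\asymp N_{i}^{3}$ points, growth degree $4$ exceeds dimension $3$, hence an ``abundance of directions'', so assign each scale a fresh direction --- is not a valid argument: it would apply verbatim to $\mathbb{Z}^{2}$, whose spheres have $\asymp N$ points and infinitely many directions, yet balls in $\mathbb{Z}^{d}$ \emph{do} satisfy the Besicovitch property. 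Counting lattice points on spheres says nothing about the real issue, which is whether, after the balls $B_{N_{1}}g_{1},\dots,B_{N_{j-1}}g_{j-1}$ through $1_{G}$ have been chosen, the small ball $B_{N_{j}}$ is still not covered by their union. In a Euclidean-like geometry this fails after a bounded number of steps no matter how many ``unused directions'' remain, and asserting that an uncovered spot can always be found is essentially a restatement of non-Besicovitchness, so your argument is circular exactly at the point where the Heisenberg-specific geometry must enter.

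A second, related problem is that the estimates you plan to use cannot verify even one incremental pair. Since $|g_{j}g_{i}^{-1}|\leq|g_{i}|+|g_{j}|$, your constraints force $N_{i}-N_{j}<|g_{i}|\leq N_{i}$ and $N_{i}<|g_{j}g_{i}^{-1}|\leq N_{i}+N_{j}$: every comparison is a threshold question at relative precision $O(N_{j}/N_{i})$, which a formula known only up to multiplicative constants ($|a^{p}b^{q}c^{r}|\asymp|p|+|q|+\sqrt{\max(0,|r|-|pq|)}$) cannot decide; symptomatically, $|c^{N_{i}^{2}}|$ is about $4N_{i}$, so your ``sphere elements'' $\pm c^{N_{i}^{2}}$ do not even lie in $B_{N_{i}}$. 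Signs and exact constants genuinely matter here: two pure central choices with exponents of the same sign always fail the incremental condition (the word length of central elements is nondecreasing in the exponent, so $|c^{m_{i}-m_{j}}|\leq|c^{m_{i}}|\leq N_{i}$), and the commutator cross-terms shift the relevant thresholds for a point of $B_{N_{j}}$ by amounts of order $N_{i}N_{j}$, which dwarf the $O(N_{j}^{2})$ of central freedom available at scale $N_{j}$; none of this bookkeeping appears in your sketch. The paper avoids the multi-direction scheme entirely: it places all translates along the center, taking sets $B_{r(i)}c^{s_{r(i)}-t_{r(i)}}$ whose choice is dictated by the structure of $M_{r}=\{m:c^{m}\in B_{4r}\}$, so that both the condition $1_{G}\in B_{r(i)}c^{s_{r(i)}-t_{r(i)}}$ and the incremental condition become one-dimensional arithmetic statements about how word balls meet the center. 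Some sharp quantitative statement of that kind about the word metric --- not a count of directions --- is what your proof is missing.
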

\begin{proof}
Let 
\[
c=b^{-1}a^{-1}ba=\left(\begin{array}{ccc}
1 & 0 & -1\\
0 & 1 & 0\\
0 & 0 & 1
\end{array}\right)
\]
and
\[
\{c^{n}\}_{n\in\mathbb{Z}}=\left\{ \left(\begin{array}{ccc}
1 & 0 & m\\
0 & 1 & 0\\
0 & 0 & 1
\end{array}\right)\left|\begin{array}{c}
\\
m\in\mathbb{Z}\\
\\
\end{array}\right.\right\} 
\]
is the center of $G$. Using the commutation relation $[b,a]=c^{-1}$
it is elementary to show that the set $M_{r}=\{m\,:\, c^{m}\in B_{4r}\}$
contains gaps that grow arbitrarily large as $r\rightarrow\infty$.
Thus we can choose $0\leq s_{r},t_{r}\in M_{r}$ such that $(s_{r},t_{r})\cap M_{r}=\emptyset$
and $t_{r}-s_{r}\rightarrow\infty$. If a sequence $r(i)$ that grows
quickly enough (e.g. if $t_{r(i+1)}-s_{r(i+1)}>r(i)^{2}$), then $\{B_{r(i)}c^{s_{r(i)}-t_{r(i)}}\}$
is an incremental sequence whose elements all contain $1_{G}$. This
proves the claim.
\end{proof}
Theorems \ref{thm:subseq-balls} can be slightly strengthened using
the following version of Theorem \ref{thm:B-is-necessary}:
\begin{thm}
\label{thm:B-necessary-for-bounded-too}In Theorem \ref{thm:B-is-necessary},
if in addition $F_{n}=B_{k(n)}$ are balls in a group, then in the
conclusion we may also assume that $\varphi,\psi\in L^{\infty}(\mu)$.\end{thm}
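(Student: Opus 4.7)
The plan is to revisit the cutting-and-stacking construction in the proof of Theorem~\ref{thm:B-is-necessary} and modify only one step, so that $\varphi$ and $\psi$ take values in the uniformly bounded range $[-1,1]$ throughout. The sole source of unboundedness in the original argument is the assignment $\varphi\equiv v$ on the set $H$, where $v=\Phi_n+\Psi_n$ grows without bound because the inductive sums $\Phi_n,\Psi_n$ grow with $|F_{i_n^*}|$. My idea is to replace this single large value on the small set $H$ by the bounded value $\pm 1$ on a correspondingly larger set $H'$, in such a way that the ratio inequalities $R_{F_{k_j}}(\varphi,\psi,x)\ge 1$ or $\le -1$ are preserved.

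When $F_n=B_{k(n)}$ are balls, I can exploit the fact that the cardinalities $|F_{k_j}|$ can be made arbitrarily large. Since no sequence of balls is Besicovitch by hypothesis, every tail $\{F_n\}_{n\ge n_0}$ is also non-Besicovitch (and strongly so, by the remark following Definition~\ref{def:Besicovitch}), so the incremental sequences furnished by Lemma~\ref{cor:strongly-non-B-almost-central-consequences} can be chosen with all indices $k_j$ larger than any preassigned value. Keeping the elements $\gamma_1,\dots,\gamma_N$ and indices $k_1,\dots,k_N$ from the lemma, define
\[
H' \;=\; \Bigl(\bigcup_{j=1}^{N}\bigcup_{g\in G_n} F_{k_j} g \gamma_j\Bigr)\setminus \bigcup_{j=1}^{N} G_n\gamma_j .
\]
This $H'$ is disjoint from the old stack positions $\bigcup_j G_n\gamma_j$ by construction. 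Because (\ref{eq:b}) gives $G_n\gamma_{j'}\cap F_{k_j}g\gamma_j=\emptyset$ for $j'\ne j$, one has $H'\cap F_{k_j}g\gamma_j \supseteq F_{k_j}g\gamma_j\setminus G_n\gamma_j$, and hence $|H'\cap F_{k_j}g\gamma_j|\ge |F_{k_j}|-|G_n|$. By choosing each $k_j$ so large that $|F_{k_j}|\ge |G_n|+\Phi_n+\Psi_n$, this intersection has size at least $\Phi_n+\Psi_n$.

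The rest of the construction proceeds verbatim, with $\bigcup_{h\in H'}I_{n+1,h}$ playing the role of $\bigcup_{h\in H}I_{n+1,h}$: on it we set $\psi\equiv 0$ and $\varphi\equiv\pm 1$ with the sign matching the parity requirement at stage $n+1$. The inductive computation of $S_{F_{k_j}}(\varphi,x)$ and $S_{F_{k_j}}(\psi,x)$ at points $x\in X_{n-1}$ is identical to the original, now yielding $R_{F_{k_j}}(\varphi,\psi,x)\ge 1$ or $\le -1$ as a consequence of $|H'\cap F_{k_j}g\gamma_j|\ge \Phi_n+\Psi_n$ rather than the large value of $v$. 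Since $\varphi$ and $\psi$ take values only in $\{-1,0,1\}$ on every newly added cell, the inductive bound $\|\varphi\|_\infty,\|\psi\|_\infty\le 1$ is preserved; the $L^1$ control is maintained by choosing $N$ large enough to absorb the (still finite) size of $H'$. Compatibility of the partial action, the requirement $i_{n+1}(x)\to\infty$, and the intermediate randomization step ensuring ergodicity all carry over without change. The main technical point is the simultaneous availability of incremental indices $k_j$ that both form a valid incremental sequence of the required length and individually satisfy $|F_{k_j}|\ge |G_n|+\Phi_n+\Psi_n$; this follows because strong non-Besicovitchness is inherited by every tail of the ball sequence, while $|B_n|\to\infty$ in any infinite finitely generated group.
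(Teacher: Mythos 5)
There is a genuine gap, and it is exactly at the point where you say ``the $L^1$ control is maintained by choosing $N$ large enough to absorb the (still finite) size of $H'$.'' In the cutting-and-stacking bookkeeping, the mass added at stage $n+1$ to $\left\Vert \varphi\right\Vert _{1}$ equals (number of new heavy cells)$\times$(value placed there)$\times r_{n}/N$, and the argument only works because the heavy set and the value are fixed \emph{before} $N$ is chosen, so that the factor $1/N$ wins. Your set $H'$ is defined only after the incremental sequence of length $N$ has been produced, and its cardinality is not controlled in terms of stage-$n$ data: already $H'\supseteq F_{k_{1}}g\gamma_{1}\setminus G_{n}\gamma_{1}$, so the added mass is at least $(|F_{k_{1}}|-|G_{n}|)\,r_{n}/N$, and nothing bounds $|F_{k_{1}}|$ in terms of $N$ (for incremental sequences of balls the largest radius typically grows far faster than the length, e.g.\ super-exponentially in the Heisenberg example of Proposition \ref{prop:Heisenberg}). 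Hence the new contribution to $\left\Vert \varphi\right\Vert _{1}$ cannot be made small, the inductive bound $\left\Vert \varphi\right\Vert _{1}<2$ is lost, and the limit function need not lie in $L^{1}(\mu)$ --- at which point there is no contradiction with the ratio ergodic theorem, whose hypotheses require $\varphi\in L^{1}$. Note also that you cannot fix this by putting the value $\pm1$ only on $\lceil v\rceil$ cells near each $\gamma_{j}$ separately: then you pay for $N$ disjoint heavy patches and the factor $1/N$ from cutting is cancelled, so the added mass is still of order $v\,r_{n}$, not small. The whole point of condition (iii) of Lemma \ref{cor:strongly-non-B-almost-central-consequences} is that a \emph{single} set $H$, chosen before $N$, meets every translate $F_{k_{j}}g\gamma_{j}$.

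The paper's proof of Theorem \ref{thm:B-necessary-for-bounded-too} resolves exactly this tension differently: it keeps the heavy region independent of $N$ by fixing a ball $B_{m}$ with $H\subseteq B_{m/3}$ \emph{before} choosing $N$, and spreads the value as $\varphi\equiv v/|B_{m/3}|$ (which is bounded once $m$ is large) over the cells of $B_{m}$. The added $L^{1}$ mass is then $|B_{m}|\cdot(v/|B_{m/3}|)\cdot r_{n}/N$, small for large $N$. To recover the lower bound $\geq v$ on the contribution to $S_{F_{k_{i}}}(\varphi,\cdot)$, one uses a geometric fact special to balls: since $H\cap F_{k_{i}}g\gamma_{i}\neq\emptyset$ and $H\subseteq B_{m/3}$, the ball $B_{m/3}\gamma'_{i}$ centered at a midpoint of a geodesic from $\gamma_{i}$ to a point of this intersection lies inside $B_{m}\cap F_{k_{i}}g\gamma_{i}$, so each translate captures at least $|B_{m/3}|$ heavy cells, i.e.\ total weight $\geq v$. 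Your proposal is missing both of these ingredients (a heavy set fixed before $N$, and a quantitative lower bound on how much of it each $F_{k_{i}}g\gamma_{i}$ captures), and without them the construction does not close.
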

\begin{proof}
In the $n$-th stage of the construction, instead of setting $\varphi=v$
only on the intervals associated to $h\in H$, choose an appropriate
$m$ and set $\varphi=v/|B_{m/3}|$ on the intervals associated to
$h\in B_{m}$. This $m$ is chosen before $N$ and the $\gamma_{i}$,
and we can ensure that $G_{n}\gamma_{i}\cap B_{m}=\emptyset$ as in
Lemma \ref{cor:strongly-non-B-almost-central-consequences} by simply
choosing an a-priori larger $N$ and discarding some of its elements,
so this does not interfere with the construction. By choosing $N$
large relative to $m$, the $L^{1}$-norm of $\varphi$ still increases
arbitrarily little. Now, since $H\cap F_{n(i)}g\gamma_{i}\neq\emptyset$
for $g\in G_{n}$ and we can assume that $m$ is large enough that
$H\subseteq B_{m/3}$, there is a ball $B_{m/3}\gamma'_{i}\subseteq B_{m}\cap F_{k_{i}}\gamma_{i}$
for some $\gamma'_{i}$ (take $\gamma'_{i}$ to be the point on the
midpoint geodesic from $\gamma_{i}$ to some element of $H\cap F_{n(i)}g\gamma_{j}$).
The proof now carries through.
\end{proof}
Finally, for finitely generated non-abelian free groups it is elementary
that no sequence of balls in the standard generator set is Besicovitch.
We omit the proof. We do not know if this is true for every generating
set, though it seems very likely that it is.

\subsection{\label{sec:Reductions}Some general reductions}

We give here some simple reductions that will be used later. Write
$L_{+}^{1}(\mu)=\{f\in L^{1}(\mu)\,:\, f\geq0\}$. 
\begin{lem}
\label{lem:reduction-to-non-negative}Let $F_{n}\subseteq G$. The
ratio ergodic theorem holds along $\{F_{n}\}$ if and only if for
every action of $G$ there is a $0\neq\psi\in L_{+}^{1}$, such that
$R_{F_{n}}(\varphi,\psi)\rightarrow\int\varphi/\int\psi$ for all
$\varphi\in L_{+}^{1}$.\end{lem}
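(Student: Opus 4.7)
The plan is to handle the two directions separately; the forward implication is essentially immediate and the reverse is a short linearity argument that reduces the general case to the hypothesized one via a change of ``denominator''.

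For the forward direction, assume the ratio ergodic theorem holds along $\{F_n\}$ for every ergodic action. Since $\mu$ is $\sigma$-finite there is a set $A$ with $0<\mu(A)<\infty$, and $\psi:=1_A\in L^1_+(\mu)$ is nonzero with $\int\psi=\mu(A)$. Applying the RET with this fixed $\psi$ immediately yields $R_{F_n}(\varphi,\psi)\to \int\varphi/\int\psi$ a.e. for every $\varphi\in L^1_+\subseteq L^1$.

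For the reverse implication, fix an ergodic action and let $\psi_0\in L^1_+$, $\psi_0\ne 0$, be the function provided by the hypothesis, so $R_{F_n}(\varphi,\psi_0)\to\int\varphi/\int\psi_0$ a.e.\ for all $\varphi\in L^1_+$ (note $\int\psi_0>0$). Given arbitrary $\varphi,\psi\in L^1$ with $\int\psi\neq 0$, decompose $\varphi=\varphi^+-\varphi^-$ and $\psi=\psi^+-\psi^-$ with $\varphi^\pm,\psi^\pm\in L^1_+$. The key identity is obtained by dividing numerator and denominator of $R_{F_n}(\varphi,\psi)$ by $S_{F_n}(\psi_0)$:
\[
R_{F_n}(\varphi,\psi)=\frac{R_{F_n}(\varphi^+,\psi_0)-R_{F_n}(\varphi^-,\psi_0)}{R_{F_n}(\psi^+,\psi_0)-R_{F_n}(\psi^-,\psi_0)}.
\]
Applying the hypothesis to each of $\varphi^{\pm},\psi^{\pm}$, the numerator converges a.e.\ to $\int\varphi/\int\psi_0$ and the denominator to $\int\psi/\int\psi_0\ne 0$. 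Taking the quotient gives $R_{F_n}(\varphi,\psi)\to\int\varphi/\int\psi$ a.e., which is exactly the RET for this action. Since the action was arbitrary, the RET holds along $\{F_n\}$.

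The only minor point to verify is that the intermediate divisions are legitimate, i.e.\ $S_{F_n}(\psi_0)\ne 0$ for all large $n$ a.e.; but this is implicit in the hypothesized convergence $R_{F_n}(\psi_0,\psi_0)=1\to 1$. There is no substantial obstacle in this argument; the lemma is essentially a linearity/change-of-reference reduction and its role in the paper is purely to simplify later work by allowing one to restrict attention to non-negative functions and to a single convenient test function $\psi$.
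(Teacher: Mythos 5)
Your proof is correct and follows essentially the same route as the paper: linearity of $R_{F_n}(\cdot,\psi_0)$ together with the decomposition into positive and negative parts extends convergence to all $L^1$ numerators, and dividing through by $S_{F_n}(\psi_0)$ is exactly the paper's identity $R_{F_n}(\varphi,\psi)=R_{F_n}(\varphi,\psi_0)/R_{F_n}(\psi,\psi_0)$ used to change the denominator. Nothing further is needed.
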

\begin{proof}
One direction is obvious. For the other fix an action and suppose
that there is a $\psi$ as above. Convergence of $R_{F_{n}}(\varphi,\psi)$
for $\varphi\in L_{+}^{1}$ implies it for all $\varphi\in L^{1}$,
since the operators $R_{F_{n}}(\cdot,\psi)$ are linear and one can
break an arbitrary $L^{1}$ function into a difference of non-negative
ones. Now for any $\varphi,\theta\in L^{1}$ with $\int\theta\neq0$,
the conclusion follows by passing to the limit in the identity $R_{F_{n}}(\varphi,\theta)=R_{F_{n}}(\varphi,\psi)/R_{F_{n}}(\theta,\psi)$. \end{proof}
\begin{lem}
\label{lem:reduction-to-sets-with-identity}There exists a sequence
$F_{n}\subseteq G$ along which the ratio ergodic theorem holds if
and only if there exists such a sequence with, in addition, $1_{G}\in F_{n}$
for all $n$.\end{lem}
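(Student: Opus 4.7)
The direction $(\Leftarrow)$ is immediate, since any sequence meeting the stronger condition meets the weaker. For $(\Rightarrow)$, suppose $\{F_n\}$ satisfies the RET. My plan is to translate each $F_n$ so that it contains $1_G$: pick any $g_n \in F_n$ and set $F'_n := F_n g_n^{-1}$, so that $1_G = g_n g_n^{-1} \in F'_n$ for every $n$. Using the left-action law $T^{h g_n^{-1}} = T^h \circ T^{g_n^{-1}}$, one verifies the key identity
\[
S_{F'_n}(\varphi)(x) = S_{F_n}(\varphi)(T^{g_n^{-1}} x), \qquad R_{F'_n}(\varphi, \psi)(x) = R_{F_n}(\varphi, \psi)(T^{g_n^{-1}} x),
\]
valid for every ergodic measure-preserving $G$-action on $(X, \mathcal{B}, \mu)$ and every $\varphi, \psi \in L^1(\mu)$ with $\int \psi\, d\mu \neq 0$. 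The task is to conclude that the right-hand side tends to $c := \int \varphi\, d\mu / \int \psi\, d\mu$ for $\mu$-a.e.\ $x$ as $n \to \infty$.

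The simple case is when some $g_0 \in G$ lies in $F_n$ for infinitely many $n$: after passing to that subsequence and taking $g_n = g_0$ fixed throughout, the RET-exceptional null set $N$ of $\{F_n\}$ is pushed forward by the single $\mu$-preserving map $T^{g_0^{-1}}$ to another null set $T^{g_0} N$, and a.e.\ convergence transfers immediately. The same argument works whenever the $g_n$ can be chosen from a fixed finite subset of $G$, as then the exceptional set for $\{F'_n\}$ is a finite (hence countable) union of null sets.

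The main obstacle is the drifting case, where no single element of $G$ belongs to infinitely many $F_n$, forcing the $g_n$ to range over an infinite subset of $G$. Then the right-hand side of the identity becomes a genuinely diagonal evaluation $R_{F_n}(\varphi, \psi)(T^{g_n^{-1}} x)$ at a point depending on $n$, and a.e.\ convergence at fixed points does not transfer mechanically. The expected resolution, which I would implement, is to pass to a subsequence via Egorov's theorem: on an exhaustion $V_1 \subset V_2 \subset \cdots$ of $X$ by sets of finite $\mu$-measure, a.e.\ convergence of the $R_{F_n}(\varphi,\psi)$ to the constant $c$ upgrades to uniform convergence off arbitrarily small subsets, and combined with the $\mu$-preservation of each $T^{g_n^{-1}}$ and a diagonal extraction, one obtains an index subsequence $\{n_k\}$ along which $R_{F'_{n_k}}(\varphi, \psi) \to c$ $\mu$-a.e. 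Since the lemma asserts only the existence of some RET-sequence containing $1_G$, replacing $\{F'_n\}$ by the subsequence $\{F'_{n_k}\}$ is permitted and completes the proof.
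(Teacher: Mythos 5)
Your first case (some $g$ lying in infinitely many $F_n$, handled by a single fixed translate) coincides with the paper's and is fine. The drifting case, however, contains a genuine gap --- in fact two. First, the ``Egorov plus diagonal extraction'' step is not a valid principle: a.e.\ convergence of $R_{F_n}(\varphi,\psi)$ gives no control over the compositions $R_{F_n}(\varphi,\psi)\circ T^{g_n^{-1}}$ when the $g_n$ eventually leave every finite subset of $G$ (which they must here, since $g_n\in F_n$ and no element lies in infinitely many $F_n$). On an infinite measure space, a.e.\ convergence yields only \emph{local} convergence in measure, while what you need is smallness of $\mu\{x\in V:|R_{F_n}(\varphi,\psi)(T^{g_n^{-1}}x)-c|>\varepsilon\}=\mu\{y\in T^{g_n^{-1}}V:|R_{F_n}(\varphi,\psi)(y)-c|>\varepsilon\}$ for a \emph{fixed} finite-measure $V$; the sets $T^{g_n^{-1}}V$ are moving targets on which convergence can be arbitrarily slow. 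The abstract statement you invoke is simply false: on $(\mathbb{R},\mathrm{Lebesgue})$ take $f_n=1_{[n,n+1]}$ and $S_n$ translation by $n$; then $f_n\to0$ everywhere and each $S_n$ preserves the measure, yet $f_n\circ S_n=1_{[0,1]}$ for every $n$, so no subsequence converges a.e.\ to $0$ on a set of positive measure. Nothing in your argument exploits the extra structure (the $S_n$ coming from the same action, $g_n\in F_n$) that would be needed to exclude this behaviour. Second, even if such an extraction did work, the subsequence $\{n_k\}$ you obtain depends on the action and on $(\varphi,\psi)$, whereas the ratio ergodic theorem requires one sequence valid simultaneously for \emph{every} ergodic action and every admissible pair in $L^1$; the class of actions is not countable, so no diagonalization over actions is available and the quantifiers do not come out in the required order.

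The paper's treatment of the drifting case is entirely different and purely combinatorial, hence action-independent: pass to a subsequence along which the $F_n$ are pairwise disjoint and prove the ratio ergodic theorem along the unions $E_n=\bigcup_{k\leq n}F_k$, using the convex-combination identity $R_{E_n}(\varphi,\psi)=\sum_{k\leq n}\frac{S_{F_k}(\psi)}{S_{E_n}(\psi)}R_{F_k}(\varphi,\psi)$ together with a proof that $S_{E_n}(\psi)\to\infty$ a.e.\ (choose $A$ with $\psi\geq\varepsilon 1_A$ and $B$ with $\mu(B)/\mu(A)$ irrational; the rational quantities $R_{F_n}(1_B,1_A)$ converge to this irrational value, forcing the denominators $S_{F_n}(1_A)$ to blow up). Since every $g\in E_1$ lies in all $E_n$, one is then back in the fixed-translate case. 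Any repair of your translation-based plan would need a device of this kind, defined from $\{F_n\}$ alone and verified uniformly over all actions, rather than a per-action subsequence extraction.
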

\begin{proof}
Only the {}``only if'' direction must be proved. Suppose the ratio
ergodic theorem holds along $\{F_{n}\}$. First suppose some element
$g\in G$ belongs to infinitely many $F_{n}$. Let  $F_{n(i)}$ be
the infinite subsequence of sets containing $g$ and write $F'_{i}=F_{n(i)}g^{-1}$,
so that $1_{G}\in F'_{i}$, and ratio ergodic theorem holds along
$\{F'_{i}\}$ because of the identity $R_{F'_{i}}(\varphi,\psi)=R_{F_{n(i)}}(\varphi,\psi)\circ T^{-g}$.

It remains to deal with the case that no $g$ is in infinitely many
$F_{n}$. In this case, by passing to a subsequence, we can assume
that the sets $F_{n}$ are pairwise disjoint. We shall show that the
ratio ergodic theorem holds along $E_{n}=\bigcup_{i\leq n}F_{i}$.
Since any $g\in E_{1}$ belongs to all of the $E_{n}$, this brings
us back tot he first case that was already established.

Thus, consider an action of $G$ on a non-atomic measure space $(X,\mathcal{B},\mu)$
and $\varphi,\psi\in L_{+}^{1}(\mu)$. Since $\{F_{n}\}$ are pairwise
disjoint, $S_{E_{k}}(\psi)=\sum_{k=1}^{n}S_{F_{k}}(\psi)$, hence
\[
R_{E_{n}}(\varphi,\psi)=\sum_{k=1}^{n}\frac{S_{F_{k}}(\psi)}{S_{E_{n}}(\psi)}R_{F_{k}}(\varphi,\psi)
\]
Since $R_{F_{k}}(\varphi,\psi)\rightarrow\int\varphi/\int\psi$, we
will be done if we show that $S_{E_{n}}(\psi)\rightarrow\infty$.
To see this choose $A\subseteq X$ and $\varepsilon>0$ with $\mu(A)>0$
and $\psi\geq\varepsilon1_{A}$. Choose another set $B\subseteq X$
with $\mu(B)/\mu(A)$ irrational. Then $R_{F_{n}}(1_{B},1_{A})$ are
rational and converge a.e. to the irrational number $\mu(B)\mu(A)$,
so their denominators, which are $S_{F_{n}}(1_{A})$, a.s. tend to
$\infty$ with $k$. Hence $S_{F_{n}}(\psi)\geq\varepsilon S_{F_{n}}(1_{A})\rightarrow\infty$,
concluding the proof. 

We note that if $\bigcup F_{n}=G$ then $S_{E_{n}}(\psi)\rightarrow\infty$
follows directly from conservativity.
\end{proof}
Note that if we only assume the ratios to converge a.e. but not necessarily
to to the limit $\int\psi/\int\varphi$, then the argument above still
works assuming that $S_{E_{n}}(\psi)\rightarrow\infty$ for $0\neq\psi\in L_{+}^{1}$.
This is the case if $\bigcup F_{n}=G$, for example, because ergodicity
on a non-atomic space is the same as conservativity.

We say that $\{F_{n}\}$ is generating if $\bigcup F_{n}$ generates
$G$ as a group. 
\begin{lem}
\label{lem:reduction-to-generating-sequences}If $\{F_{n}\}$ does
not generate then the ratio ergodic theorem fails along $\{F_{n}\}$.\end{lem}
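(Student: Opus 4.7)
The plan is to exhibit, for any non-generating $\{F_{n}\}$, an ergodic measure-preserving $G$-action on a non-atomic measure space together with $\varphi,\psi\in L_{+}^{1}(\mu)$, $\int\psi\neq0$, for which $R_{F_{n}}(\varphi,\psi)$ does not converge a.e.\ to $\int\varphi/\int\psi$. Let $H=\langle\bigcup_{n}F_{n}\rangle$, a proper subgroup of $G$ by hypothesis. The key observation is that under any $G$-action, every $F_{n}$-translate of $x$ lies in the $H$-orbit of $x$; so if we can arrange an action in which this orbit is confined to a proper subset of $X$, then functions supported outside that subset are invisible to the ratios $R_{F_{n}}$.

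Assume first that $H$ is infinite. I would fix any ergodic measure-preserving action of $H$ on a non-atomic $\sigma$-finite infinite-measure space $(Y,\nu)$ (such actions exist for every countable infinite group) and form the coinduced $G$-action on $X=Y\times(G/H)$ with $\mu=\nu\otimes c$, where $c$ is counting measure on $G/H$. This produces an ergodic, non-atomic, $\sigma$-finite $G$-space. Choosing a section $s\colon G/H\to G$ with $s(eH)=e$, the standard coinduction formula $g\cdot(y,\gamma)=(s(g\gamma)^{-1}gs(\gamma)\cdot y,\,g\gamma)$ specializes, for $h\in H$ and $\gamma=eH$, to $h\cdot(y,eH)=(hy,eH)$. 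Hence the fiber $Y\times\{eH\}$ is $H$-invariant, and therefore $F_{n}$-invariant for every $n$.

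Since $H\neq G$, pick $\gamma_{0}\in(G/H)\setminus\{eH\}$ and $\tilde{A},\tilde{B}\subseteq Y$ of finite positive $\nu$-measure, and set
\[
\varphi=1_{\tilde{A}\times\{eH\}},\qquad\psi=\varphi+1_{\tilde{B}\times\{\gamma_{0}\}}.
\]
Then $\int\varphi/\int\psi=\nu(\tilde{A})/(\nu(\tilde{A})+\nu(\tilde{B}))<1$. On the other hand, for every $x\in Y\times\{eH\}$ the $F_{n}$-translates of $x$ remain in $Y\times\{eH\}$, where $\varphi\equiv\psi$, so $R_{F_{n}}(\varphi,\psi)(x)=1$ whenever defined. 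Since this fiber has positive $\mu$-measure, the ratio ergodic theorem fails. If instead $H$ is finite, the coinduction is unavailable (no ergodic action of a finite group on a non-atomic space exists), but in that case it suffices to pick any ergodic non-atomic $G$-action and $A,B$ of finite positive measure with $\mu(A)/\mu(B)$ irrational; since $|F_{n}|\leq|H|$, each $R_{F_{n}}(1_{A},1_{B})(x)$ takes values in a fixed finite set of rationals and so cannot converge a.e.\ to the irrational target. The only real obstacle is producing the base ergodic $H$-action in the infinite case and verifying that the coinduced $G$-action inherits ergodicity, both of which are standard.
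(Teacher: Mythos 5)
Your proposal is correct in substance, but it takes a genuinely different (and heavier) route than the paper. The paper's proof is short and uniform: take any ergodic $G$-action whose restriction to $H=\langle\bigcup_{n}F_{n}\rangle$ is non-ergodic (e.g.\ the Bernoulli shift on $\{0,1\}^{G}$), let $\mathcal{I}$ be the $\sigma$-algebra of $H$-invariant sets, and pick $\varphi,\psi\in L^{1}$ that are $\mathcal{I}$-measurable with $\varphi/\psi$ non-constant; since $F_{n}\subseteq H$ one has $S_{F_{n}}(\varphi)=|F_{n}|\varphi$ and $S_{F_{n}}(\psi)=|F_{n}|\psi$, so $R_{F_{n}}(\varphi,\psi)=\varphi/\psi$ for every $n$, a non-constant function that cannot converge a.e.\ to the constant $\int\varphi/\int\psi$. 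This treats finite and infinite $H$ at once and requires no construction. You instead build a concrete action: what you call coinduction is really the induced action of $G$ from the $H$-action, on $Y\times G/H$ (your cocycle formula $g\cdot(y,\gamma)=(s(g\gamma)^{-1}gs(\gamma)y,g\gamma)$ is the induction formula), and you exploit the $H$-invariant fiber $Y\times\{eH\}$, on which the ratios only see a pair of functions that agree. That works, and it buys you an explicitly infinite-measure counterexample, plus the separate finite-$H$ branch, which is a nice self-contained observation that the RET fails whenever $\sup_{n}|F_{n}|<\infty$ (the ratios lie in a fixed finite set of rationals).

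Two small repairs would make your version airtight. First, with indicator functions the ratio on the fiber can be $0/0$ for infinitely many $n$; this still defeats convergence to $\int\varphi/\int\psi<1$, but it is cleaner to place strictly positive $L^{1}$ densities on $Y\times\{eH\}$ and $Y\times\{\gamma_{0}\}$, equal on the first fiber, so that $R_{F_{n}}\equiv1$ there whenever $F_{n}\neq\emptyset$. Second, you do not need the black-box existence of an ergodic infinite-measure-preserving $H$-action (true, but the least standard ingredient you invoke): the paper's definition of the ratio ergodic theorem quantifies over non-atomic spaces and its own proof of this lemma uses a probability space, so a Bernoulli base $(Y,\nu)$ for the induction suffices; alternatively such infinite-measure actions can be produced by the cutting-and-stacking scheme of Section \ref{sub:Necessity}. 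Ergodicity of the induced action is indeed standard (intersect a $G$-invariant set with the distinguished fiber, where the stabilizer acts by the original ergodic $H$-action).
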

\begin{proof}
Suppose $F_{n}$ lie in a proper subgroup $H<G$ and consider an ergodic
action of $G$ whose restriction to $H$ is non-ergodic (e.g. a product
measures on $\{0,1\}^{G}$ with the shift action). Let $\mathcal{I}$
be the $\sigma$-algebra of $H$-invariant sets and choose functions
$\varphi,\psi\in L^{1}$ that are constant on the atoms of $\mathcal{I}$
but $\mathbb{E}(\varphi|\mathcal{I})/\mathbb{E}(\psi|\mathcal{I})$
is not constant. Clearly $R_{F_{n}}(\varphi,\psi)=\mathbb{E}(\varphi|\mathcal{I})/\mathbb{E}(\psi|\mathcal{I})$
for all $n$, so $R_{F_{n}}(\varphi,\psi)\not\rightarrow\int\varphi/\int\psi$.
\end{proof}

\subsection{\label{sub:Z-infinity}The group $\mathbb{Z}^{\infty}$}

We now turn to $G=\mathbb{Z}^{\infty}$ and the proof of Theorem \ref{thm:Z-infinity},
switching to additive notation. The main ingredient is Proposition
2.7 from \cite{Hochman2006a}. In that paper the Besicovitch property
is called incompressiblity (\cite[Definition 1.9]{Hochman2006a}),
the two notions are the same by Proposition \ref{prop:high-multiplicity-incremental-sequences}.
\begin{prop}
\label{prop:no-B-sequences-in-Z-infinity}If $F_{n}\subseteq\mathbb{Z}^{\infty}$
are finite sets and $\{F_{n}\}$ generates, then $\{F_{n}\}$ is not
Besicovitch.
\end{prop}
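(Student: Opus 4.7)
My plan is to apply Proposition \ref{prop:high-multiplicity-incremental-sequences}: showing that $\{F_n\}$ is not Besicovitch reduces to exhibiting, for every $K$, an incremental sequence of multiplicity at least $K$ at the identity element $0 \in \mathbb{Z}^\infty$. Concretely, I must produce non-increasing indices $n(1) \geq n(2) \geq \ldots \geq n(K)$ and translates $g_1, \ldots, g_K$ such that $-g_i \in F_{n(i)}$ for every $i$ (securing multiplicity $K$ at $0$) and $g_j - g_i \notin F_{n(i)}$ whenever $i < j$ (the incremental condition). I would first reduce to the case $0 \in F_n$ for every $n$ (harmless, by translating each $F_n$ by a fixed element without affecting Besicovitch-ness) as needed for the proposition.

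Since $\{F_n\}$ generates the infinite-rank group $\mathbb{Z}^\infty$ and each $F_n$ is finite and hence supported in some finite-rank sublattice, I would extract a subsequence $\{F_{n_k}\}$ and choose $v_k \in F_{n_k}$ forming a $\mathbb{Q}$-linearly independent sequence, with $v_k$ introducing a new coordinate not touched by $F_{n_j}$ for $j < k$. Explicitly, using the decomposition $\mathbb{Z}^\infty = \bigoplus_c \mathbb{Z} e_c$, one finds at each stage $k$ a coordinate $c_k$ lying outside the supports of $F_{n_1}, \ldots, F_{n_{k-1}}$ together with some later $F_{n_k}$ possessing a nonzero entry at $c_k$; such $F_{n_k}$ exists because otherwise $\{F_n\}$ would generate a group of finite rank, contradicting the generating hypothesis.

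The core of the plan is then to arrange the $v_k$'s so that the incremental condition $v_a - v_b \notin F_{n_a}$ holds for all $b < a$, after which setting $n(i) = n_{K-i+1}$ and $g_i = -v_{K-i+1}$ completes the construction. The coordinate-separation makes $v_a - v_b$ nonzero at coordinate $c_a$, but this alone does not exclude $v_a - v_b$ from $F_{n_a}$, since $F_{n_a}$ can itself have elements with nonzero $c_a$-entry. The remedy is an inductive choice: after $v_1, \ldots, v_{k-1}$ have been picked, the forbidden set $(F_{n_k} + \{v_1, \ldots, v_{k-1}\}) \cap F_{n_k}$ is finite, and by pushing further into the generating sequence one can pick a fresh $F_{n_k}$ with enough elements in the new coordinate $c_k$ to dodge the forbidden locations. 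This is precisely the combinatorial extraction performed in \cite[Proposition 2.7]{Hochman2006a}, which we invoke; the equivalence between incompressibility and the failure of the Besicovitch property was noted just before the statement and follows from Proposition \ref{prop:high-multiplicity-incremental-sequences}.

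The main obstacle is this final inductive step, where one must balance three competing demands: the element $v_k$ should live in a new coordinate, should be $\mathbb{Q}$-linearly independent of its predecessors, and should make all differences $v_a - v_b$ avoid $F_{n_a}$. A naive greedy procedure can fail if $F_{n_k}$ is structurally tight, so one must use the freedom to pass arbitrarily deep into the sequence and exploit the countable supply of coordinates available in $\mathbb{Z}^\infty$. Once the extraction succeeds for every $K$, Proposition \ref{prop:high-multiplicity-incremental-sequences} gives that $\{F_n\}$ is not Besicovitch.
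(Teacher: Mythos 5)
The paper does not actually prove Proposition \ref{prop:no-B-sequences-in-Z-infinity}: it imports it wholesale from \cite[Proposition 2.7]{Hochman2006a}, adding only the remark that ``incompressibility'' there corresponds to the Besicovitch property here. Since your argument also delegates its decisive combinatorial step to that same citation (you say explicitly that the extraction ``is precisely'' the one performed in \cite{Hochman2006a} and invoke it), in substance you are doing what the paper does. The problem is the scaffolding you wrap around the citation, which contains genuine gaps if it is read as an argument. Your opening reduction --- that non-Besicovitch is equivalent to exhibiting, for every $K$, an incremental sequence of multiplicity $K$ at the identity --- uses the \emph{converse} direction of Proposition \ref{prop:high-multiplicity-incremental-sequences}, which is proved only under the extra hypotheses that the $F_{n}$ are symmetric and increasing. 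The sets in Proposition \ref{prop:no-B-sequences-in-Z-infinity} are arbitrary finite generating sets, and in the paper's application they are of the form $\widetilde{E}(F_{n}\cup(-F_{n}))$, which need be neither symmetric nor nested; without those hypotheses an incremental sequence of multiplicity $K$ may admit a covering subfamily of small multiplicity (a center $g_{j}$ is excluded from \emph{earlier} sets of the sequence but may lie in later ones), so high-multiplicity incremental sequences alone do not contradict Definition \ref{def:Besicovitch}. Likewise, your preliminary claim that translating each $F_{n}$ so that $0\in F_{n}$ is ``harmless'' for the Besicovitch property is asserted without justification: in Definition \ref{def:Besicovitch} the covering requirement is on the set of centers, which does not translate along with the sets, so this is not an obvious invariance.

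The portion of the construction you sketch yourself (fresh coordinates $c_{k}$, elements $v_{k}\in F_{n_{k}}$, dodging the finite forbidden set so that $v_{a}-v_{b}\notin F_{n_{a}}$) stops exactly at the point you yourself identify as the crux --- the greedy choice can fail --- and at that point you appeal to \cite[Proposition 2.7]{Hochman2006a}, which is the statement being proved, merely phrased in the other terminology. So either present the proposition as a citation, as the paper does, in which case the reduction via Proposition \ref{prop:high-multiplicity-incremental-sequences} and the translation step should be dropped (they are where the errors live and they add nothing), or carry out the extraction in full for general, non-symmetric, non-nested generating sequences and tie it to Definition \ref{def:Besicovitch} directly rather than through the conditional converse of Proposition \ref{prop:high-multiplicity-incremental-sequences}.
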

Combined with the fact that any sequence in an abelian group is almost
central, the proposition above and Theorem \ref{thm:B-is-necessary}
immediately implies that the ratio ergodic theorem fails along every
generating symmetric sequence $F_{n}\subseteq\mathbb{Z}^{\infty}$
with $0_{G}\in F_{n}$. We now show that the symmetry assumption is
not necessary:
\begin{prop}
\label{prop:no-ratio-for-Z-infinity}The ratio ergodic theorem fails
along any generating sequence $F_{n}\subseteq\mathbb{Z}^{\infty}$
with $0_{G}\in F_{n}$.\end{prop}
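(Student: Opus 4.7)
The strategy is to adapt the proof of Theorem~\ref{thm:B-is-necessary} to $\mathbb{Z}^\infty$ without requiring $F_n$ to be symmetric. By Lemma~\ref{lem:reduction-to-sets-with-identity} I may assume $0 \in F_n$. Abelianness of $\mathbb{Z}^\infty$ makes $\{F_n\}$ automatically almost central (Definition~\ref{def:almost-central}), and Proposition~\ref{prop:no-B-sequences-in-Z-infinity} shows that $\{F_n\}$ is strongly non-Besicovitch, since adjoining any finite set to a generating sequence keeps it generating and hence non-Besicovitch. The only hypothesis of Theorem~\ref{thm:B-is-necessary} missing from this setup is the symmetry of $F_n$, which entered its proof only through property~(i) of Lemma~\ref{cor:strongly-non-B-almost-central-consequences} in the case $i<j$. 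The plan is to prove a $\mathbb{Z}^\infty$-variant of Lemma~\ref{cor:strongly-non-B-almost-central-consequences} using the coordinate structure in place of symmetry, after which the cutting-and-stacking construction of Theorem~\ref{thm:B-is-necessary} applies.

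For the variant, given finite $D,E \subseteq \mathbb{Z}^\infty$ with $0 \in D\cap E$ and target length $\ell$, I use the construction behind Hochman's proof of Proposition~\ref{prop:no-B-sequences-in-Z-infinity}: since $F_n$ generates, extract a subsequence $n(1)<\cdots<n(\ell)$ together with strictly increasing coordinate indices $d(1)<\cdots<d(\ell)$ exceeding the dimensions spanned by $D\cup E$, such that $F_{n(j)} \subseteq \mathbb{Z}^{d(j)}$ contains an element $f_j$ with nonzero $e_{d(j)}$-component, where $e_k$ denotes the $k$-th standard generator. Introduce pairwise distinct fresh coordinate indices $m_1,\ldots,m_\ell$, all larger than $d(\ell)$, fix $K \neq 0$, and set
\[
\gamma_j = -f_j + K e_{m_j}, \qquad H = D \cup \{e + K e_{m_j} : e \in E,\ j=1,\ldots,\ell\}.
\]
Properties (i)-(iii) and the incremental property then reduce entirely to coordinate-wise comparisons in the $e_{m_j}$ direction. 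In particular, property (i) for any $i\neq j$ holds because $E+\gamma_i$ has $e_{m_i}$-component $K\neq 0$ while $F_{n(j)}+E+\gamma_j$ has zero $e_{m_i}$-component (since $m_i \neq m_j$ and $m_i$ exceeds the coordinates appearing in $F_{n(j)}, E, f_j$), so symmetry is avoided entirely. Property~(iii) is immediate from $f=f_j \in F_{n(j)}$ contributing $e+Ke_{m_j}\in H$, and property~(ii) follows similarly together with a dimension argument that $f_i \notin E-E$. The price to pay is that $|H|=|D|+\ell|E|$ scales with $\ell$.

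The main technical obstacle is then managing this $\ell$-dependence of $|H|$ in Theorem~\ref{thm:B-is-necessary}'s construction, since the constraint $|H|vr_n/N<2-\int\varphi$ in that proof no longer follows automatically from taking $N$ large. I plan to handle this by interleaving, within each stage, a preliminary sub-cutting step that partitions each interval $I_{n,g}$ into $L_0 \gg 1$ sub-intervals of length $r_n/L_0$, relabeled by fresh coordinate shifts of $g$, without modifying $\varphi,\psi$; and by restricting the application of the variant to a bounded subset $E' \subseteq G_n$ of the current stack shape, so that the mass added at each stage is bounded above by a quantity of order $(|D|+N|E'|)vr_n/(L_0 N)$, which can be made arbitrarily small. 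With the right bookkeeping and suitable alternation over stages between which portion of the stack is modified, this produces an ergodic measure-preserving action of $\mathbb{Z}^\infty$ on a non-atomic measure space together with $\varphi,\psi \in L^1(\mu)$, $\int\psi\neq 0$, such that $R_{F_n}(\varphi,\psi)$ diverges $\mu$-a.e., which is the conclusion of Proposition~\ref{prop:no-ratio-for-Z-infinity}.
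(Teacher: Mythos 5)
Your coordinate substitute for symmetry in Lemma~\ref{cor:strongly-non-B-almost-central-consequences} is fine as far as it goes: with $\gamma_j=-f_j+Ke_{m_j}$ and $H=D\cup\{e+Ke_{m_j}\,:\,e\in E,\ j\leq\ell\}$ one does get (i)--(iii) for all $i\neq j$, the indices $n(j)$ can be taken arbitrarily large by finiteness of each $F_n$ plus generation, and the incremental ordering is harmless. The genuine gap is the step you defer to ``bookkeeping'': the growth $|H|\approx\ell|E|$ is not a nuisance but destroys the mechanism that makes the cutting-and-stacking work. In the proof of Theorem~\ref{thm:B-is-necessary} one fixed set $H$, of size independent of $N$, is met by $F_{k_j}g\gamma_j$ for \emph{every} $g\in G_n$ and every column $j$, so new mass $|H|\,v\,r_n/N\to0$ suffices to force the ratio on the whole stack. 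In your variant the only element of $H$ guaranteed to lie in $F_{n(j)}+e+\gamma_j$ is $e+Ke_{m_j}$, which serves exactly one pair $(e,j)$, i.e.\ one subinterval of measure $r_{n+1}$. Hence the $\varphi$-mass added at a stage is at least $v$ times the measure of the set $A_n$ of points whose ratio is forced at that stage, and this accounting is unchanged by the preliminary $L_0$-subdivision or by restricting to $E'\subseteq G_n$ (these only redistribute which part is forced). Since $v=\Phi_n+\Psi_n\geq1$ (indeed $\psi\equiv1$ on $X_1$ and is never increased), keeping $\varphi\in L^1$ forces $\sum_n\mu(A_n)<\infty$, and Borel--Cantelli gives that a.e.\ point is forced at only finitely many stages; the construction has no other source of fluctuation, so it does not produce a.e.\ divergence of $R_{F_n}(\varphi,\psi)$. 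Alternating which portion of the stack is treated cannot help, because the obstruction is the summability of the forced measures, not their location.

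For contrast, the paper never runs the construction on the non-symmetric sets at all. It uses abelianness to transfer the RET from $\{F_n\}$ to $\{-F_n\}$ via the action $\widetilde{T}^g=T^{-g}$, deduces convergence of the ratios taken with respect to the averaged measures $\nu_n$ on $E_n=F_n\cup(-F_n)$, and observes that on non-negative functions $R_{\nu_n}$ and $R_{E_n}$ differ by a multiplicative factor at most $4$; since $E_n$ is symmetric, contains $0$, and is almost central and strongly non-Besicovitch, the already-established symmetric construction yields non-negative $\varphi,\psi$ for which $R_{E_n}(\varphi,\psi)$ fluctuates by more than a factor $4$, a contradiction. If you want to keep your direct line of attack, you would need a version of your lemma in which a set $H$ of size bounded independently of $\ell$ meets all the translates $F_{n(j)}+e+\gamma_j$; it is exactly this reuse that the fresh-coordinate trick gives up, and with a single fresh coordinate the disjointness in (i) fails for $i<j$, which is precisely the point where the paper invokes symmetry.
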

\begin{proof}
Suppose that $\{F_{n}\}$ satisfies the ratio ergodic theorem and
$1_{G}\in F_{n}$. Then the same is true for $\{-F_{n}\}$. To see
this, given an action $\{T^{g}\}_{g\in\mathbb{Z}^{\infty}}$ define
an action $\widetilde{T}^{g}=T^{-g}$ (this is an action because $\mathbb{Z}^{\infty}$
is abelian). Then $\sum_{g\in-F_{n}}\varphi(T^{g}x)=\sum_{g\in F_{n}}\varphi(\widetilde{T}^{g}x)$,
and so the ratios over $-F_{n}$ with respect to $T$ are the same
as the ratios over $F_{n}$ with respect to $T$, and so converge
as required. 

Now let $E_{n}=F_{n}\cup(-F_{n})$, which is a symmetric sequence
with $0_{G}\in E_{n}$. Define a probability measures $\nu_{n}$ on
$E_{n}$ by 
\[
\nu_{n}=\frac{1}{2|F_{n}|}\sum_{g\in\pm F_{n}}\delta_{g}
\]
For a finitely supported probability measure $\nu$ on $G$, let 
\[
S_{\nu}(\varphi)=\int\varphi(T^{g}x)\, d\nu(g)
\]
and define $R_{\nu}(\varphi,\psi)=S_{\nu}(\varphi)/S_{\nu}(\psi)$.
Then for any action and $\varphi,\psi$ as in the ratio ergodic theorem,
\[
R_{\nu_{n}}(\varphi,\psi)=\frac{1}{2}(R_{F_{n}}(\varphi,\psi)+R_{-F_{n}}(\varphi,\psi))\rightarrow\int\varphi/\int\psi\qquad\mu\mbox{-a.s.}
\]
While $R_{\nu_{n}}\neq R_{E_{n}}$, on non-negative functions the
two differ by at most a multiplicative constant of $4$, since $\nu_{n}$
is equivalent to the uniform measure $u_{n}$ on $E_{n}$ with Radon-Nikodym
derivative between $1$ and $2$, and $R_{E_{n}}=R_{u_{n}}$. Thus,
if there is an action and functions $\varphi,\psi\in L_{+}^{1}$,
$\int\psi\neq0$, such that $R_{E_{n}}(\varphi,\psi)$ fluctuates
wildly enough (e.g. $\limsup/\liminf>4$), then we have a contradiction
to the convergence of $R_{\nu_{n}}$. Since $\{E_{n}\}$ is symmetric,
contains $1_{G}$, and is almost central and strongly non-Besicovitch
(see discussion preceding this proposition), such an action and pair
of functions can be constructed using exactly the same scheme as in
the proof of Theorem \ref{thm:B-is-necessary}. We omit the details.
\end{proof}
Now, we have already seen that if there is some sequence along which
the ratio ergodic theorem holds then there is also such a sequence
that contains $0_{G}$ (Lemma \ref{lem:reduction-to-sets-with-identity})
and generates (\ref{lem:reduction-to-generating-sequences}). With
these facts in hand, the proposition above proves Theorem \ref{thm:Z-infinity}.

\section{\label{sec:groups-of-polynomial-growth}Groups of polynomial growth}

In this section we prove the ratio ergodic theorem {}``in density''
for groups of polynomial growth (Theorem \ref{thm:polynomial-density-thm}).
After defining the sequence $F_{n}\subseteq G$ in Section \ref{sub:Setup},
the proof follows the standard two-step scheme: in Section \ref{sub:Chacon-Ornstein}
we prove, for fixed $\psi$, that $R_{F_{n}}(\varphi,\psi)$ converges
to the proper limit on a dense family of functions $\varphi\in L^{1}$
(a {}``Chacon-Ornstein lemma''), and in Section \ref{sub:maximal-inequality}
we extend to all $\varphi\in L^{1}$ using a suitable maximal inequality.
Both parts use growth properties of $G$ in an essential way.

\subsection{\label{sub:Setup}The averaging sequence}

Let $G$ be a group of polynomial growth and $B_{n}=A^{n}$ the balls
with respect to some symmetric generating set $A$. By Gromov's theorem
\cite{Gromov1981}, $G$ is virtually nilpotent, and a theorem of
Bass \cite{Bass1972} implies that there are constants $c_{1},c_{2},c$
(moreover, with $c\in\mathbb{N}$) such that 
\[
c_{1}n^{c}\leq|B_{n}|\leq c_{2}n^{c}
\]
Define the $k$-boundary of $B_{n}$ to be 
\[
\partial_{k}B_{n}=B_{n+k}\setminus B_{n-k}
\]
We remark that it is easy to show that $\{B_{n}\}$ is a F\o{}lner
sequence, but we will not use this fact. 

We now define the subsequence $F_{i}=B_{n(i)}$ for which we will
prove Theorem \ref{thm:polynomial-density-thm} (the construction
below can be perturbed in many ways to get a large class other such
sequences). Let
\[
J_{m}=[2^{m-1},2^{m})\cap\mathbb{Z}
\]
We define the index sequence $n(i)$ for $i\in J_{m}$, recursively
in $m=1,2,3\ldots$. For $m=1$ set $n(1)=1$. Now assume we have
defined $n_{i}$ for $i\in\bigcup_{k<m}J_{k}$. Let 
\[
N(m)=n(2^{m-1}-1)
\]
which is the largest value of $n(i)$ defined so far, and set $\{n(i)\}_{i\in J_{m}}$
to be the arithmetic sequence with $|J_{m}|$ terms and gap $3N(m)$,
starting at 
\[
L(m)=|J_{m}|\cdot3N(m)
\]
Thus $n(2^{m-1}+i)=L(m)+i\cdot3N(m)$ for $0\leq i<2^{m}$. 

Note that $\{n(i)\}_{i\in J_{m}}\subseteq[L(m),2L(m))$, hence $N(m)\leq2L(m-1)$,
and, since by the last equation  $L(m-1)=2^{m-1}\cdot3N(m-1)$, we
deduce that
\[
N(m)\leq(62^{m-1})^{m}
\]

Having defined $F_{i}=B_{n(i)}$, for $i\in J_{m}$, set 
\[
F_{i}^{+}=B_{n(i)+N(m)}
\]
and
\[
\partial^{*}F_{i}=\partial_{N(m)}F_{i}
\]
Notice that $F_{i-1}^{+}\cup\partial^{*}F_{i}\subseteq F_{i}^{+}$
and $\partial^{*}F_{i}\cap F_{i-1}^{+}=\emptyset$.

\subsection{\label{sub:Chacon-Ornstein}Convergence on a dense subset of $L^{1}(\mu)$}

For the rest of the section, fix an ergodic measure-preserving action
of $G$ on a $\sigma$-finite measure space $(X,\mathcal{B},\mu)$.
Given $\varphi:X\rightarrow[0,\infty)$ write 
\begin{equation}
\varphi_{i}(x)=\frac{\sum_{g\in\partial^{*}F_{i}}\varphi(T^{g}x)}{\sum_{g\in F_{i-1}^{+}}\varphi(T^{g}x)}\label{eq:boundary-function}
\end{equation}

\begin{lem}
\label{lem:growth-from-large-boundaries}Let $\varphi$ be as above
and $x\in X$. Given $\varepsilon,\delta>0$ suppose that $N\in J_{m}$
and $U\subseteq\{1,\ldots,N\}$ are such that $|U|/N\geq\delta$ and
$\varphi_{i}(x)>\varepsilon$ for $i\in U$. Then, assuming $m$ is
large enough in a manner depending only on $\varepsilon,\delta$,
\[
S_{F_{N}^{+}}(\varphi,x)\geq|F_{N}^{+}|^{2}\varphi(x)
\]
\end{lem}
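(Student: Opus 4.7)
The plan is to show that on $U$ the partial sums $S_{F_i^+}(\varphi,x)$ grow geometrically in $i$, then compare this growth against the (much slower) polynomial growth of $|F_N^+|$ in $m$.

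First I would exploit the two containments observed immediately after the setup: $F_{i-1}^+\subseteq F_i^+$ and $\partial^{*}F_i\subseteq F_i^+$ with $\partial^{*}F_i\cap F_{i-1}^+=\emptyset$. These give
\[
S_{F_i^+}(\varphi,x)\;\geq\; S_{F_{i-1}^+}(\varphi,x)+S_{\partial^{*}F_i}(\varphi,x).
\]
Rewriting the hypothesis $\varphi_i(x)>\varepsilon$ as $S_{\partial^{*}F_i}(\varphi,x)>\varepsilon\,S_{F_{i-1}^+}(\varphi,x)$ yields
\[
S_{F_i^+}(\varphi,x)\;\geq\;(1+\varepsilon)\,S_{F_{i-1}^+}(\varphi,x)\qquad\text{for }i\in U,
\]
while for $i\notin U$ mere monotonicity gives $S_{F_i^+}(\varphi,x)\geq S_{F_{i-1}^+}(\varphi,x)$. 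Telescoping over $i=2,\dots,N$ and using $1_G\in F_1^+$ (so $S_{F_1^+}(\varphi,x)\geq\varphi(x)$),
\[
S_{F_N^+}(\varphi,x)\;\geq\;(1+\varepsilon)^{|U|-1}\varphi(x)\;\geq\;(1+\varepsilon)^{\delta N-1}\varphi(x).
\]

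The main obstacle is then a purely arithmetic growth comparison, and it is the part that crucially uses the specific form of the index sequence $n(i)$. Since $N\in J_m$, we have $N\geq 2^{m-1}$, so the left-hand exponent is at least $\delta\log(1+\varepsilon)\cdot 2^{m-1}-O(1)$, growing like $2^m$. On the other hand, unwinding the recursion $L(m)=|J_m|\cdot 3N(m)$ and $N(m)\leq 2L(m-1)$ gives the bound $N(m)\leq(6\cdot 2^{m-1})^m$ already noted in the setup, and since $n(N)+N(m)\leq 3L(m)\leq 9\cdot 2^{m-1}N(m)$, Bass's polynomial bound $|B_k|\leq c_2 k^c$ yields $\log|F_N^+|^2=O(m^2)$ with constants depending only on $G$. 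Because $2^m/m^2\to\infty$, there is an $m_0=m_0(\varepsilon,\delta,G)$ such that for all $m\geq m_0$,
\[
(1+\varepsilon)^{\delta N-1}\;\geq\;|F_N^+|^2,
\]
which combined with the telescoping estimate gives the required inequality $S_{F_N^+}(\varphi,x)\geq|F_N^+|^2\varphi(x)$. The whole argument is essentially an observation plus a size comparison; the only substantive point is that the double-exponential spacing of the $n(i)$ (via $N(m)$) is arranged precisely so that exponential growth in $|U|$ still beats $|F_N^+|^2$.
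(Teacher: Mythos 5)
Your proof is correct and follows essentially the same route as the paper: the hypothesis $\varphi_i(x)>\varepsilon$ together with the disjoint inclusion $F_{i-1}^+\cup\partial^*F_i\subseteq F_i^+$ gives the factor $(1+\varepsilon)$ at each $i\in U$, monotonicity handles $i\notin U$, and telescoping yields $(1+\varepsilon)^{\delta N-1}\varphi(x)\geq(1+\varepsilon)^{\delta 2^{m-1}-1}\varphi(x)$, which is then compared with the bound $\log|F_N^+|^2=O(m^2)$ coming from Bass's estimate and $N(m)\leq(6\cdot 2^{m-1})^m$. The only (immaterial) difference is how you bound the radius of $F_N^+$ ($n(N)+N(m)\leq 3L(m)$ versus the paper's $n(N)+N(m)\leq 2n(N)$ and $n(N)\leq N(m+1)$), so nothing further is needed.
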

\begin{proof}
We suppress $x$ in our notation. For $i\in U$ we have by definition
that $S_{\partial^{*}F_{i}}(\varphi)\geq\varepsilon S_{F_{i-1}^{+}}(\varphi)$
and hence $S_{F_{i}^{+}}(\varphi)\geq(1+\varepsilon)S_{F_{i-1}^{+}}(\varphi)$.
Since $\varphi\geq0$, for any $j<i$ we have $S_{F_{i}^{+}}(\varphi)\geq S_{F_{j}^{+}}(\varphi)$.
Starting from $S_{F_{N}^{+}}(\varphi)$ and applying this recursively
to the elements of $U$ in reverse order, we have 
\begin{eqnarray*}
S_{F_{N}^{+}}(\varphi) & \geq & (1+\varepsilon)^{|U|-1}S_{F_{1}^{+}}(\varphi)\\
 & \geq & (1+\varepsilon)^{\delta N-1}\varphi\\
 & \geq & (1+\varepsilon)^{\delta2^{m-1}}\varphi
\end{eqnarray*}
It remains to notice that 
\[
|F_{N}^{+}|\leq|B_{2n(N)}|\leq Cn(N)^{c}\leq CN(m+1)^{c}\leq C(2^{m})^{cm}=C2^{cm^{2}}
\]
for some constant $C$ depending only on $c_{2}$, and that if $m$
is large in a manner depending on $\varepsilon,\delta$ then $C2^{2cm^{2}}\leq(1+\varepsilon)^{\delta2^{m-1}}$.\end{proof}
\begin{thm}
\label{thm:Chacon-Ornstein}Let $\varphi\in L^{1}(\mu)$ with $\varphi\neq0$
and $\varphi\geq0$. Then $\varphi_{i}\xrightarrow{\overline{d}}0$
$\nu$-a.e., where $d\nu=\varphi d\mu$.\end{thm}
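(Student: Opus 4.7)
The plan is to argue by contradiction via a first Borel--Cantelli argument built on Lemma \ref{lem:growth-from-large-boundaries}. Suppose the conclusion fails. Then for some rational $\varepsilon,\delta>0$ the set
$$A_{\varepsilon,\delta}=\{x:\overline{d}(\{i:\varphi_i(x)>\varepsilon\})>\delta\}$$
has $\nu(A_{\varepsilon,\delta})>0$. For each $x\in A_{\varepsilon,\delta}$ there are infinitely many $N$ with $|\{i\leq N:\varphi_i(x)>\varepsilon\}|>\delta N$, and restricting such $N$ to lie in $J_m$ for $m$ above the threshold in Lemma \ref{lem:growth-from-large-boundaries} (which is harmless since the density condition itself holds for arbitrarily large $N$), the lemma yields $S_{F_N^+}(\varphi,x)\geq|F_N^+|^2\varphi(x)$. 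Writing
$$B_N=\{x:\varphi(x)>0\text{ and }S_{F_N^+}(\varphi,x)\geq|F_N^+|^2\varphi(x)\},$$
this exhibits $A_{\varepsilon,\delta}\subseteq\limsup_N B_N$ modulo the $\nu$-null set $\{\varphi=0\}$.

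The next step is a routine $L^1$ estimate for $\nu(B_N)$. On $B_N$ we have $\varphi\leq|F_N^+|^{-2}S_{F_N^+}(\varphi)$, so, using $T$-invariance of $\mu$ together with the identity $\int S_F(\varphi)\,d\mu=|F|\,\|\varphi\|_1$,
$$\nu(B_N)=\int_{B_N}\varphi\,d\mu\leq\frac{1}{|F_N^+|^2}\int S_{F_N^+}(\varphi)\,d\mu\leq\frac{\|\varphi\|_1}{|F_N^+|}.$$

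It remains to establish summability of $\sum_N 1/|F_N^+|$. This follows from the growth recursion of Section \ref{sub:Setup}: iterating $L(m)=2^{m-1}\cdot 3N(m)$ and the fact that $N(m)$ is comparable to $L(m-1)$ gives $L(m)\geq 2^{\Omega(m^2)}$, so for $N\in J_m$ the bound $|F_N^+|\geq c_1 L(m)^c$ makes the partial sum $\sum_{N\in J_m}1/|F_N^+|\leq |J_m|/(c_1 L(m)^c)$ decay super-geometrically in $m$, hence is summable. Borel--Cantelli then gives $\nu(\limsup B_N)=0$, whence $\nu(A_{\varepsilon,\delta})=0$, and a countable union over rational $\varepsilon,\delta>0$ completes the proof. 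The only delicate point — already absorbed into the definition of the averaging sequence — is that $|F_N^+|$ must grow fast enough in $N$ for Borel--Cantelli to apply, and this is exactly what the super-exponential gaps $3N(m)$ in the construction were designed to deliver.
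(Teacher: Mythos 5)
Your proof is correct, and although it rests on the same key ingredient as the paper (Lemma \ref{lem:growth-from-large-boundaries}, applied at the infinitely many $N$ where the upper-density hypothesis puts at least a $\delta$-fraction of good indices below $N$, with $N$ taken in blocks $J_m$ beyond the lemma's threshold), the estimation step is genuinely different. The paper first truncates, working on $E_N=\{\delta<\varphi<\delta^{-1}$ and a $\delta$-fraction of bad indices$\}$, then transfers along the orbit via $|F_N|\nu(E_N)=\int S_{F_N}(\varphi 1_{E_N})\,d\mu$, and finishes with Markov's inequality together with the doubling bound $|(F_N^+)^2|\leq C|F_N|$ from polynomial growth, arriving at $\nu(E_N)\leq C\delta^{-2}|F_N|^{-2}\leq C\delta^{-2}N^{-2}$ --- a bound that is summable for any subsequence with $|F_N|\geq N$. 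You instead integrate the pointwise inequality $\varphi\leq|F_N^+|^{-2}S_{F_N^+}(\varphi)$ on $B_N$ directly against $\mu$, using only invariance ($\int S_{F}(\varphi)\,d\mu=|F|\int\varphi\,d\mu$), which gives $\nu(B_N)\leq\|\varphi\|_1/|F_N^+|$ with no truncation, no transference and no doubling estimate; the price is a first-power bound, so summability has to come from the super-exponential growth of the radii built into Section \ref{sub:Setup}, namely $n(N)\geq L(m)$ for $N\in J_m$ and $L(m)=3\cdot2^{m-1}N(m)\geq3\cdot2^{m-1}L(m-1)$, so that $\sum_{N\in J_m}1/|F_N^+|\leq 2^{m-1}/L(m)\leq 1/(3L(m-1))$ is summable (your $2^{\Omega(m^2)}$ bound is correct; note both proofs implicitly use that $G$ is infinite, you via $|B_r|\geq r$ or $c\geq1$, the paper via its ``trivial bound'' $|F_N|\geq N$). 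The trade-off: the paper's $N^{-2}$ bound survives the many perturbations of the averaging sequence alluded to in Section \ref{sub:Setup}, whereas your argument is tied to the rapid growth of this particular $n(i)$; in exchange, yours is shorter and avoids both the truncation of $\varphi$ and the doubling property at this stage.
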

\begin{proof}
Fix $\varepsilon>0$. It suffices to show that
\[
\overline{d}\left(i\,:\,\varphi_{i}(x)>\varepsilon\right)=0\qquad\nu\mbox{-a.e.}
\]
Fix $\delta>0$, which we suppress in our notation, and let
\[
E_{N}=\left\{ x\in X\,\left|\,\begin{array}{c}
\delta<\varphi(x)<\delta^{-1}\mbox{ and }\varphi_{i}(x)>\varepsilon\mbox{ for }\\
\mbox{at least a }\delta\mbox{-fraction of }1\leq i\leq N
\end{array}\right.\right\} 
\]
It is enough to show, for every $\delta>0$, that $\nu$-a.e. $x$
belongs to only finitely many $E_{N}$. 

We establish the last claim. Assume, as we may, that $m$ is large
relative to $\varepsilon,\delta$ as in the previous lemma. Let $N\in J_{m}$.
By invariance of $\mu$, we have
\begin{eqnarray}
|F_{N}|\cdot\nu(E_{N}) & = & \int\sum_{g\in F_{N}}1_{E_{N}}(T^{g}x)\varphi(T^{g}x)\, d\mu(x)\nonumber \\
 & = & \int S_{F_{N}}(\varphi\cdot1_{E_{N}})\, d\mu(x)\label{eq:trans}
\end{eqnarray}
Suppose that $g\in F_{N}$ is such that $T^{g}x\in E_{N}$. By the
previous lemma (applied to $\varphi\circ T^{g}$) and the definition
of $E_{N}$, 
\begin{eqnarray*}
S_{F_{N}^{+}}(\varphi)(T^{g}x) & \geq & |F_{N}|^{2}\cdot\varphi(T^{g}x)\\
 & \geq & |F_{N}|^{2}\cdot\delta
\end{eqnarray*}
Since $hg\in(F_{N}^{+})^{2}$ for every $h\in F_{N}^{+}$, we have
shown that if $N\in J_{m}$ then 
\[
S_{F_{N}}(\varphi\cdot1_{E_{N}})>0\qquad\implies\qquad S_{(F_{N}^{+})^{2}}(\varphi)>\delta|F_{N}|^{2}
\]
By definition of $E_{N}$ we have $\varphi(T^{g}y)<\delta^{-1}$ if
$1_{E_{N}}(T^{g}y)\neq0$. Therefore $S_{F_{N}}(\varphi\cdot1_{E_{N}})\leq|F_{N}|\cdot\delta^{-1}$
so, by the implication above, 
\[
S_{F_{N}}(\varphi\cdot1_{E_{N}})\leq|F_{N}|\cdot\delta^{-1}\cdot1_{\{S_{(F_{N}^{+})^{2}}(\varphi)>\delta|F_{N}|^{2}\}}
\]
Integrating this $d\mu$ and using (\ref{eq:trans}) and Markov's
inequality, 
\begin{eqnarray*}
|F_{N}|\nu(E_{N}) & \leq & |F_{N}|\cdot\delta^{-1}\cdot\mu\left(x\,:\, S_{(F_{N}^{+})^{2}}(\varphi)>\delta|F_{N}|^{2}\right)\\
 & \leq & |F_{N}|\cdot\delta^{-2}\left(|F_{N}|^{-2}\cdot\int S_{(F_{N}^{+})^{2}}(\varphi)\, d\mu\right)\\
 & = & \delta^{-2}\cdot|F_{N}|^{-2}\cdot|(F_{N}^{+})^{2}|\cdot\int\varphi\, d\mu
\end{eqnarray*}
Now by polynomial growth and the fact that $(F_{N}^{+})^{2}\subseteq B_{4n(N)}$
we have 
\[
|(F_{N}^{+})^{2}|\leq C\cdot|F_{N}|
\]
for a constant $C$ depending on $c_{1},c_{2},c$, but not on $m$.
Thus, we have shown
\[
\nu(E_{N})\leq\frac{C\int\varphi\, d\mu}{\delta^{2}|F_{N}|^{2}}\leq\frac{C\int\varphi\, d\mu}{\delta^{2}N^{2}}
\]
using the trivial bound $|F_{N}|\geq N$. This is summable, so by
Borel-Cantelli, $\nu$-a.e. $x$ belongs to finitely many $E_{N}$.
\end{proof}
Recall that a co-boundary is a function of the form $\varphi=\tau-\tau^{g}$
for some $g\in G$. It is said to be an $L^{1}$-co-boundary if $\tau\in L^{1}(\mu)$,
and positive if $\tau\geq0$. As a consequence of the theorem above
we obtain a Chacon-Ornstein type statement. Note that in what follows,
$\sigma$-finiteness ensures that statements about strictly positive
$L^{1}$-functions are not vacuous. 
\begin{cor}
\label{cor:Chacon-Ornstein}Let $\tau\in L^{1}(\mu)$ with $\tau>0$
and $\tau\neq0$. Then for every $g\in G$, 
\[
\dlim_{i\rightarrow\infty}R_{F_{i}}(\tau-\tau^{g},\tau)=0\qquad\mu\mbox{-a.e.}
\]
\end{cor}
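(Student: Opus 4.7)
The plan is to bound $|R_{F_i}(\tau - \tau^g, \tau)|$ pointwise, for all sufficiently large $i$, by the boundary-to-interior ratio $\tau_i(x)$ of (\ref{eq:boundary-function}) applied to $\varphi = \tau$, and then invoke Theorem \ref{thm:Chacon-Ornstein}. Because $\tau > 0$ pointwise, the measures $d\nu = \tau\,d\mu$ and $d\mu$ are mutually absolutely continuous, so the $\nu$-a.e.\ density convergence $\tau_i \xrightarrow{\overline{d}} 0$ given by the theorem is equivalent to $\mu$-a.e.\ convergence, which is what we need.

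For the pointwise bound, I would first rewrite $S_{F_i}(\tau^g) = S_{F_i'}(\tau)$ for an appropriate translate $F_i'$ of $F_i$ (either $gF_i$ or $F_i g$, depending on the convention for $\tau^g$) obtained by reindexing the sum. Picking $k$ with $g \in B_k$, the sandwich $B_{n(i)-k} \subseteq F_i' \subseteq B_{n(i)+k}$ gives $F_i \triangle F_i' \subseteq \partial_k F_i \subseteq \partial^* F_i$ as soon as $i \in J_m$ with $N(m) \geq k$, which holds for all but finitely many $i$ since $N(m) \to \infty$. Since $\tau \geq 0$, it follows that
\[
|S_{F_i}(\tau - \tau^g)(x)| \;\leq\; S_{F_i \triangle F_i'}(\tau)(x) \;\leq\; S_{\partial^* F_i}(\tau)(x).
\]

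For the denominator I would use the inclusion $F_{i-1}^+ \subseteq F_i$: inside a block $J_m$ one has $n(i) - n(i-1) = 3N(m)$, whence $F_{i-1}^+ = B_{n(i-1)+N(m)} \subseteq F_i$; at a block transition $i = 2^{m-1}$, the definition $N(m) = n(2^{m-1}-1)$ together with $N(m-1) \leq N(m) \leq L(m) = n(i)$ yields the same containment. Non-negativity of $\tau$ then gives $S_{F_i}(\tau) \geq S_{F_{i-1}^+}(\tau)$, and combining with the numerator bound yields $|R_{F_i}(\tau - \tau^g, \tau)(x)| \leq \tau_i(x)$ for all sufficiently large $i$. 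Applying Theorem \ref{thm:Chacon-Ornstein} to $\varphi = \tau$ now finishes the proof. There is no real obstacle: the boundary thickness $N(m)$ in the definition of $\partial^* F_i$ was tailored precisely so that a shift by a bounded element $g$ only disturbs $F_i$ within this boundary, and the arithmetic progression structure of $n(i)$ inside each $J_m$ was tuned to guarantee $F_{i-1}^+ \subseteq F_i$.
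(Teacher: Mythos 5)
Your argument is correct and is essentially the paper's own proof: bound the numerator by $S_{\partial^{*}F_{i}}(\tau)$ (since translation by a fixed $g$ only disturbs $F_{i}$ within the $N(m)$-thick boundary once $N(m)$ is large enough), bound the denominator from below by $S_{F_{i-1}^{+}}(\tau)$ via $F_{i-1}^{+}\subseteq F_{i}$ and $\tau\geq0$, so that $|R_{F_{i}}(\tau-\tau^{g},\tau)|\leq\tau_{i}$, and then apply Theorem \ref{thm:Chacon-Ornstein} with $\varphi=\tau$ together with the equivalence of $\mu$ and $\tau\,d\mu$. You simply make explicit two points the paper leaves implicit (the symmetric-difference bookkeeping for the translate and the verification of $F_{i-1}^{+}\subseteq F_{i}$, including at block transitions), and both checks are correct.
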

\begin{proof}
There is an $i_{0}$ such that $g\in F_{i_{0}}$, and for $i>i_{0}$
we have
\[
|R_{F_{i}}(\tau-\tau^{g},\tau)|\leq\left|\frac{S_{\partial^{*}F_{i}}(\tau)}{S_{F_{i}}(\tau)}\right|\leq\left|\frac{S_{\partial^{*}F_{i}}(\tau)}{S_{F_{i-1}^{+}}(\tau)}\right|=\tau_{i}
\]
where $\tau_{i}$ is defined as in (\ref{eq:boundary-function}),
and we have used $\tau\geq0$. From the theorem we conclude that $R_{F_{i}}(\tau-\tau^{g},\tau)\xrightarrow{\overline{d}}0$
at $\tau d\mu$-a.e. point. Since $\tau>0$ the measures $\mu$ and
$\tau d\mu$ are equivalent, and the corollary follows.
\end{proof}
The next conclusion is standard from the previous one.
\begin{prop}
\label{prop:dense-good-functions}Given $0<\psi\in L^{1}(\mu)$, the
set of $\varphi\in L^{1}(\mu)$ such that $R_{F_{i}}(\varphi,\psi)\xrightarrow{\overline{d}}\int\varphi/\int\psi$
a.e. is dense in $L^{1}$.\end{prop}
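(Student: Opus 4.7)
The plan is to exhibit an explicit subspace $V\subseteq L^1(\mu)$ that is dense in $L^1$ and on which the $\overline d$-convergence of $R_{F_i}(\cdot,\psi)$ is transparent; by linearity of $\overline d$-lim this will suffice. My candidate is
\[
V=\mathbb{R}\psi+\spn\{\tau-\tau^g: g\in G,\ \tau\in L^1(\mu),\ |\tau|\le C\psi\text{ for some }C\},
\]
i.e.\ multiples of $\psi$ plus coboundaries of functions pointwise dominated by a multiple of $\psi$.

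First I would verify convergence on the generators. The case $\varphi=c\psi$ is immediate since $R_{F_i}(c\psi,\psi)\equiv c$. For a coboundary $\varphi=\tau-\tau^g$ with $|\tau|\le C\psi$, the target limit is $0=\int\varphi\,d\mu/\int\psi\,d\mu$ (by $T^g$-invariance of $\mu$). I would bound
\[
|S_{F_i}(\tau-\tau^g)|=|S_{F_i\setminus F_ig}(\tau)-S_{F_ig\setminus F_i}(\tau)|\le S_{F_i\triangle F_ig}(|\tau|)\le C\cdot S_{\partial^*F_i}(\psi),
\]
where the last inclusion uses that for $g\in B_k$, as soon as $i\in J_m$ with $N(m)\ge k$, one has $F_i\triangle F_ig\subseteq B_{n(i)+k}\setminus B_{n(i)-k}\subseteq\partial^*F_i$. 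A short check from the recursive definition of $n(i)$ gives $F_{i-1}^+\subseteq F_i$, hence $S_{F_i}(\psi)\ge S_{F_{i-1}^+}(\psi)$, and dividing yields $|R_{F_i}(\tau-\tau^g,\psi)|\le C\psi_i$ for all large $i$. Since $\psi>0$, Theorem \ref{thm:Chacon-Ornstein} gives $\psi_i\xrightarrow{\overline d}0$ on a full $\psi d\mu$-measure set, hence $\mu$-a.e.

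Next I would establish density of $V$. Suppose $\Phi\in L^\infty(\mu)$ annihilates $V$. Measure-preservation converts $\int\Phi(\tau-\tau^g)d\mu=0$ into $\int(\Phi-\Phi^{g^{-1}})\tau\,d\mu=0$ for every $g\in G$ and every $\tau$ with $|\tau|\le C\psi$. Such $\tau$ form a dense subspace of $L^1$: any $\tau\in L^1$ is the $L^1$-limit of $\tau\cdot 1_{\{|\tau|\le N\psi\}}$ by dominated convergence, since $\psi>0$. Hence $\Phi$ is $T^g$-invariant for every $g$, constant by ergodicity, and $\int\Phi\psi=0$ forces $\Phi\equiv0$.

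The main obstacle is balancing two conflicting requirements on the coboundary class: it must be small enough that the boundary-estimate of Theorem \ref{thm:Chacon-Ornstein} can be applied to control $S_{F_i}(\tau-\tau^g)$ in terms of $S_{\partial^*F_i}(\psi)$ (which forces a comparison $|\tau|\le C\psi$), yet broad enough that together with $\mathbb{R}\psi$ it is dense in $L^1$. The strict positivity of $\psi$ is what makes both demands compatible and is used crucially in the truncation step of the density argument.
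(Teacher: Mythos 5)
Your proposal is correct and follows the paper's overall strategy -- the dense subspace is the same (multiples of $\psi$ plus coboundaries of functions dominated by $\psi$) -- but each half is handled a little differently, and both variants work. For convergence on coboundaries, the paper factors $R_{F_i}(\tau-\tau^g,\psi)=R_{F_i}(\tau-\tau^g,\tau)\cdot R_{F_i}(\tau,\psi)$, bounds the second factor by the domination constant, and invokes Corollary \ref{cor:Chacon-Ornstein} for $\tau$; you instead bound $|S_{F_i}(\tau-\tau^g)|\le C\,S_{\partial^{*}F_i}(\psi)$ directly (the inclusions $F_i\triangle F_i g\subseteq\partial^{*}F_i$ for large $i$ and $F_{i-1}^{+}\subseteq F_i$ are indeed valid for the sequence of Section \ref{sub:Setup}) and apply Theorem \ref{thm:Chacon-Ornstein} once, to $\psi$ itself. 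This is a genuine small simplification: it handles signed $\tau$ with $|\tau|\le C\psi$ in one stroke, so you never need to split a coboundary into a difference of positive ones, and you bypass the corollary entirely. For density, the paper reduces to the standard fact (cited from \cite{Feldman2007}) that the span of $\psi$ and the $L^{1}$-coboundaries is dense for ergodic actions; you reprove it via duality: an annihilating $\Phi\in L^{\infty}$ satisfies $\int(\Phi-\Phi^{g^{-1}})\tau\,d\mu=0$ for all dominated $\tau$, the truncations $\tau 1_{\{|\tau|\le N\psi\}}$ show such $\tau$ are dense in $L^{1}$ (here $\psi>0$ and $\sigma$-finiteness enter, the latter also justifying $(L^{1})^{*}=L^{\infty}$), so $\Phi$ is invariant, hence constant by ergodicity, hence $0$ since $\int\Phi\psi\,d\mu=0$ and $\int\psi\,d\mu>0$. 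The net effect is a self-contained argument where the paper cites a known lemma; nothing is missing or circular. One minor notational point: with $\tau^{g}=\tau\circ T^{g}$ and a left action one gets $S_{F_i}(\tau^{g})=S_{gF_i}(\tau)$, so the relevant symmetric difference is $F_i\triangle gF_i$ rather than $F_i\triangle F_i g$; since for $g\in B_k$ both left and right translates of $B_{n(i)}$ are squeezed between $B_{n(i)-k}$ and $B_{n(i)+k}$, your estimate is unaffected.
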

\begin{proof}
Let us say that $\tau\in L^{1}$ is $\psi$-dominated if $0<\tau<M\psi$
for some $M=M(\tau)$. We claim, first that the convergence in the
statement holds for $\varphi=\tau-\tau^{g}$ where $g\in G$ and $\tau\in L^{1}$
is $\psi$-dominated; and second, that the joint linear span of $\psi$
and the set of such $\varphi$ is dense in $L^{1}$. The two claims
prove the proposition since the limit in question holds trivially
when $\varphi=\psi$ and the operators $R_{F_{i}}(\cdot,\psi)$ are
linear.

For the first statement, let $\varphi=\tau-\tau^{g}$ with $0<\tau\leq M\psi$.
Then $|R_{F_{n}}(\tau,\psi)|\leq M$, hence by the previous corollary,
\[
R_{F_{i}}(\tau-\tau^{g},\psi)=R_{F_{i}}(\tau-\tau^{g},\tau)\cdot R_{F_{i}}(\tau,\psi)\xrightarrow{\overline{d}}0
\]
For the second statement, observe that since $\psi>0$, the set of
differences of $\psi$-dominated functions is dense in the positive
cone of $L^{1}$. It follows easily that the linear span of the set
of co boundaries $\varphi=\tau-\tau^{g}$ with $\tau$ a $\psi$-dominated
function is dense among all $L^{1}$-co-boundaries (note that in general
a co-boundary splits into the difference of two positive co-boundaries).
We now refer to the standard fact that, for ergodic actions and assuming
$\int\psi$$\neq0$, the linear span of $\psi$ and the $L^{1}$-co-boundaries
is dense subspace of $L^{1}(\mu)$ (see e.g. \cite{Feldman2007}). 
\end{proof}

\subsection{\label{sub:maximal-inequality}A density version of the maximal inequality}

The next step is to prove a maximal-type inequality that will allow
to go from the $\overline{d}$-convergence of $R_{F_{i}}(\varphi,\psi)$
on a dense set of $\varphi\in L^{1}(\mu)$ to all of $L^{1}(\mu)$.
Define the density-limsup by 
\[
\dls_{n\rightarrow\infty}a_{n}=\inf\{t\in\mathbb{R}\,:\,\overline{d}(n\,:\, a_{n}>t)=0\}
\]

\begin{lem}
If $a_{n}:X\rightarrow\mathbb{R}$ are measurable then $a=\dls a_{n}$
is measurable.\end{lem}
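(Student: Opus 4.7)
The plan is to reduce measurability of $a = \dls a_n$ to measurability of the function $x \mapsto \bar{d}(\{n : a_n(x) > t\})$ for each fixed $t \in \mathbb{R}$, and then to observe that the latter is a $\limsup$ of sums of indicator functions of measurable sets.

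First I would rewrite the sublevel sets of $a$ in a countable form. By the definition of $\dls$ as an infimum, for any $s \in \mathbb{R}$ we have $a(x) < s$ iff there is some $t < s$ with $\bar{d}(\{n : a_n(x) > t\}) = 0$, and by monotonicity of $t \mapsto \bar{d}(\{n : a_n(x) > t\})$ such a $t$ may be chosen rational. Hence
\[
\{a < s\} \;=\; \bigcup_{\substack{t \in \mathbb{Q} \\ t < s}} \bigl\{x : \bar{d}(\{n : a_n(x) > t\}) = 0\bigr\},
\]
so it suffices to show that for each fixed $t$ the set $\{x : \bar{d}(\{n : a_n(x) > t\}) = 0\}$ is measurable.

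Next I would write out the density explicitly. By definition,
\[
\bar{d}(\{n : a_n(x) > t\}) \;=\; \limsup_{N \to \infty} \frac{1}{N}\sum_{n=1}^{N} 1_{\{a_n > t\}}(x).
\]
Since each $a_n$ is measurable, so is each indicator $1_{\{a_n > t\}}$, and therefore each finite average $\frac{1}{N}\sum_{n=1}^N 1_{\{a_n > t\}}$ is measurable. The $\limsup$ of a countable family of measurable functions is measurable, so the function $x \mapsto \bar{d}(\{n : a_n(x) > t\})$ is measurable. Its zero set is therefore measurable, and taking the countable union over $t \in \mathbb{Q}$ with $t < s$ shows that $\{a < s\}$ is measurable for every $s$, proving that $a$ is measurable.

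There is no real obstacle here; the only minor point is justifying that the infimum in the definition of $\dls$ can be restricted to rationals, which follows from the fact that $\bar{d}(\{n : a_n(x) > t\})$ is nonincreasing in $t$, so if it vanishes at some real $t < s$ it also vanishes at any rational $t' \in (t, s)$.
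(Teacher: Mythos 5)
Your proof is correct and follows essentially the same route as the paper: measurability of $\delta_t(x)=\overline{d}(n:a_n(x)>t)$ for each fixed $t$ via the identity $\delta_t(x)=\limsup_N\frac{1}{N}\sum_{n=1}^N 1_{\{a_n>t\}}(x)$. You merely spell out the reduction step (restricting the infimum to rational $t$ using monotonicity of $t\mapsto\delta_t(x)$), which the paper leaves implicit with ``it suffices to show.''
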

\begin{proof}
It suffices to show that $\delta_{t}(x)=\overline{d}(n\,:\, a_{n}(x)>t)$
is measurable for each fixed $t$, and this is obvious since $\delta_{t}(x)=\limsup\frac{1}{N}\sum_{n=1}^{N}1_{\{a_{n}>t\}}(x)$. \end{proof}
\begin{thm}
\label{thm:maximal-inequality}Let $\varphi,\psi\in L^{1}(\mu)$ with
$\varphi,\psi\geq0$ and $\int\psi d\mu\neq0$, and write $d\nu=\psi d\mu$.
Then 
\[
\nu\left(\dls_{n\rightarrow\infty}R_{F_{n}}(\varphi,\psi)>t\right)\leq C\frac{\int\varphi\, d\mu}{t}
\]

\end{thm}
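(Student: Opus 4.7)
The plan is to combine polynomial growth (which provides a doubling Vitali/Wiener covering for balls in $G$) with the density hypothesis to substitute for the absent Besicovitch property. First I would reduce to $\varphi, \psi \geq 0$ and work with the level set $A_t = \{x : \dls_{n \to \infty} R_{F_n}(\varphi, \psi)(x) > t\}$. For each $\delta > 0$, let
\[
A_{t,\delta} = \{x : \overline{d}(\{n : R_{F_n}(\varphi, \psi)(x) > t\}) \geq \delta\};
\]
since $A_t = \bigcup_{\delta > 0} A_{t, \delta}$ is an increasing union as $\delta \to 0^+$, it suffices to bound $\nu(A_{t, \delta})$ by $C \int \varphi \, d\mu / t$ with $C$ \emph{independent} of $\delta$.

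At each $x \in A_{t, \delta}$, for arbitrarily large $N$ the set $I_N(x) = \{n \leq N : S_{F_n}(\varphi)(x) > t \, S_{F_n}(\psi)(x)\}$ contains at least $\delta N$ indices. I would translate this into a covering inequality in $G$ via the measure-preservation identity $\int f \cdot S_{F_n}(\psi) \, d\mu = \int S_{F_n}(f) \, d\nu$ (valid because $F_n = B_{n(i)}$ is symmetric). Integrating against $d\nu$ and rearranging the density of good scales, reduce the bound on $\nu(A_{t, \delta})$ to bounds on averages of the form $\sum_{n \leq N} \int S_{F_n}(\varphi) \, d\mu = \int \varphi \, d\mu \cdot \sum_{n \leq N} |F_n|$, modulo a covering/multiplicity factor that must be controlled so as not to lose the $N$ (or the $\delta$).

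The main obstacle is producing the covering/multiplicity bound, since the Besicovitch property of $\{F_n\}$ fails in our setting and the standard transfer-via-Besicovitch approach is unavailable. I expect the right substitute to be a ``density Vitali'' selection: at each $x$, greedily pick the largest scale in $I_N(x)$ and iterate, using the Wiener covering lemma for balls in the doubling group $G$ to bound the multiplicity by a function of the doubling constant alone. The density parameter $\delta$ should enter only to guarantee enough remaining scales at each step of the greedy procedure, and the polynomial growth estimate $|F_n^+| \leq C|F_n|$ from Section~\ref{sub:Setup} is what allows an inflation step without loss. If this goes through, the resulting constant depends on the polynomial growth exponent of $G$ but not on $\delta$, $N$, or the specific functions $\varphi,\psi$, yielding the claimed bound after passing to the limit $\delta \to 0^+$.
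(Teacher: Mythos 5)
Your outline reduces correctly to the sets $A_{t,\delta}$ and transfers to the orbit, but the heart of the matter --- the covering/multiplicity step --- rests on a claim that is false in this setting. You propose a greedy largest-scale-first selection with ``the Wiener covering lemma for balls in the doubling group'' bounding the multiplicity ``by a function of the doubling constant alone.'' Bounded multiplicity of a largest-first (incremental) selection is precisely the Besicovitch property, not the Vitali/Wiener property, and doubling does not imply it: the paper itself shows (Proposition \ref{prop:Heisenberg}) that in the Heisenberg group incremental sequences of balls of widely different radii can have arbitrarily large multiplicity, and this group is exactly the motivating example of polynomial growth here. The Vitali/Wiener route (select disjoint balls, control the union by dilated balls) is also unavailable for the \emph{ratio} maximal function: the denominator is the $\psi$-mass $S_{F_n}(\psi)$ along the orbit, and this weight is not doubling, so comparing a ball with its dilate loses control of exactly the quantity you need. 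This is why the classical transfer-via-covering argument requires Besicovitch and why the theorem is delicate.

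What the paper does instead, and what is missing from your plan, is twofold. First, bounded multiplicity is only invoked for balls whose radii lie within a factor $2$ of each other (Lemma \ref{lem:bounded-radius-Besicovitch}), which does follow from polynomial growth; the specially constructed subsequence $F_i=B_{n(i)}$ of Section \ref{sub:Setup} groups the admissible radii into dyadic blocks $J_m$ for exactly this purpose. Second, overlaps \emph{across} blocks, where multiplicity cannot be bounded, are discarded rather than counted: because every radius from earlier blocks is at most $N(m)$, the discarded portion of a block-$m$ ball lies in its boundary $\partial^{*}F_{i}$, and the loss is controlled by the hypothesis $\varphi_{i}(T^{g}x)<\varepsilon$ on the boundary functions (\ref{eq:boundary-function}) --- a hypothesis supplied by the Chacon--Ornstein-type Theorem \ref{thm:Chacon-Ornstein}, which is where the density of good scales is actually consumed (one needs a scale $i\le N$ at which the ratio is large \emph{and} the boundary sum is $\varepsilon$-small, and density is what guarantees the two index sets intersect). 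Your proposal uses neither the boundary functions nor Theorem \ref{thm:Chacon-Ornstein}, and uses the density only to ``guarantee enough remaining scales,'' so the cross-scale overlap problem is left unaddressed; as stated, the covering step would fail for the same reason the Besicovitch property fails.
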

Before giving the proof of the maximal inequality, let us use it to
complete the proof of the ratio ergodic theorem. Fix $0<\psi\in L^{1}(\mu)$;
by Lemma \ref{lem:reduction-to-non-negative} it suffices to prove
$R_{F_{n}}(\varphi,\psi)\rightarrow\int\varphi/\int\psi$ for $\varphi\in L^{1}(\mu)$.
Noting that $R_{F_{n}}(\varphi-c\psi,\psi)=R_{F_{n}}(\varphi,\psi)-c$
and setting $c=\int\varphi/\int\psi$, we may further assume that
$\int\varphi d\mu=0$. Let $\varepsilon>0$ and let $\varphi'\in L^{1}(\mu)$
be such that $R_{F_{n}}(\varphi',\psi)\xrightarrow{\overline{d}}0$
and $\left\Vert \varphi-\varphi'\right\Vert _{1}<\varepsilon$, as
exists by the previous proposition. By Theorem \ref{thm:maximal-inequality},
\[
\nu\left(\dls_{n\rightarrow\infty}R_{F_{n}}(|\varphi-\varphi'|,\psi)>\frac{1}{2}\sqrt{\varepsilon}\right)<C\frac{\left\Vert \varphi-\varphi'\right\Vert _{1}}{\sqrt{\varepsilon}}<C\sqrt{\varepsilon}
\]
and by the triangle inequality and $R_{F_{n}}(\varphi',\psi)\xrightarrow{\overline{d}}0$,
\[
\nu\left(\dls_{n\rightarrow\infty}|R_{F_{n}}(\varphi,\psi)|>\sqrt{\varepsilon}\right)\leq\nu\left(\dls_{n\rightarrow\infty}|R_{F_{n}}(|\varphi-\varphi'|,\psi)|>\sqrt{\varepsilon}\right)<C\sqrt{\varepsilon}
\]
From this we conclude that 
\[
\nu\left(\dls_{n\rightarrow\infty}|R_{F_{n}}(\varphi,\psi)|>0\right)=0
\]
Since $\psi>0$ the measures $\nu$ and $\mu$ are equivalent, so
this is the same as $R_{F_{n}}(\varphi,\psi)\xrightarrow{\overline{d}}0$
$\mu$-a.e., as desired.

Turning to the maximal inequality, we will use the following Besicovitch-type
property:
\begin{lem}
\label{lem:bounded-radius-Besicovitch}There is a constant $C$ such
that for any $k,N$, if $\{B_{r(i)}g_{i}\}_{i=1}^{k}$ is an incremental
sequence such that $r(i)\in[N,2N]$, then its multiplicity is at most
$C$.\end{lem}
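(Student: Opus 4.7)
The plan is to translate the combinatorial incremental condition into a uniform geometric separation of the centers $g_i$, and then apply a standard volume-packing argument powered by Bass's polynomial growth bound.

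First I would extract separation. Because $r(1)\ge\cdots\ge r(k)\ge N$ and $B_N\subseteq B_{r(i)}$, the incremental hypothesis $g_j\notin B_{r(i)}g_i$ for $j>i$ gives $g_j\notin B_Ng_i$; by symmetry of the balls, the same argument used in Proposition \ref{prop:high-multiplicity-incremental-sequences} (replacing $F_{n(j)}$ by $B_N$) shows this also for $j<i$. Hence $g_ig_j^{-1}\notin B_N$ for all $i\ne j$, i.e., the centers are pairwise $N$-separated in the right-invariant word metric.

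Next I would reformulate multiplicity as a packing problem and invoke Bass. Using symmetry of $B_{r(i)}$, the multiplicity at a point $h\in G$ equals $\#\{i:g_ih^{-1}\in B_{r(i)}\}$, and since $r(i)\le 2N$ this is at most $\#\{i:g_i\in B_{2N}h\}$. Set $R=\lfloor N/2\rfloor$. The inclusion $B_R\cdot B_R\subseteq B_{2R}\subseteq B_N$ combined with the $N$-separation forces the right-translated balls $B_Rg_i$ (over those $i$ contributing to the multiplicity at $h$) to be pairwise disjoint; and each $B_Rg_i$ lies in $B_{R+2N}h$, since writing $g_i=ah$ with $a\in B_{2N}$ gives $B_Rg_i=(B_Ra)h\subseteq B_{R+2N}h$. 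Volume comparison then yields
\[
k\cdot|B_R|\;\le\;|B_{R+2N}|,
\]
and Bass's estimate $c_1n^c\le|B_n|\le c_2n^c$ bounds the ratio on the right by a constant depending only on $c_1,c_2,c$, uniformly in $N$ (the finitely many small values of $N$ where $R=0$ are handled by the trivial bound $k\le|B_{2N}|$).

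There is no real obstacle; the one thing to watch is keeping the left/right conventions straight, so that the incremental condition, the right-translated balls $B_{r(i)}g_i$, and the relevant word metric all refer consistently to the same side.
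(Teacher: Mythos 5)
Your argument is correct and is essentially the paper's proof: both are volume-packing arguments in which pairwise disjoint balls of radius comparable to $N/2$ around the centers contributing to the multiplicity at a point $h$ are packed into a ball of radius $O(N)$ around $h$, and Bass's bound $c_1n^c\le|B_n|\le c_2n^c$ bounds the ratio of volumes uniformly in $N$ (the paper uses $B_{[r(i)/2]}g_i$ and the containing ball $B_{3N}h$ rather than your $B_{\lfloor N/2\rfloor}g_i\subseteq B_{R+2N}h$, a cosmetic difference). Only note that in the display $k\cdot|B_R|\le|B_{R+2N}|$ the symbol $k$ should denote the number of indices contributing to the multiplicity at $h$, not the length of the whole incremental sequence.
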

\begin{proof}
From the assumption $B_{[r(i)/2]}g_{i}$ are pairwise disjoint sets
of size$\geq c_{1}(N/2)^{c}$. If $h\in\bigcap_{i\in I}B_{r(i)}g_{i}$
for some $I\subseteq\{1,\ldots k\}$ then $B_{[r(i)/2]}g_{i}\subseteq B_{3N}h$
for $i\in I$, and the maximal number of such balls is therefore $|B_{3N}h|/|B_{[N/2]}|$.
Since $|B_{3N}|\leq c_{2}(3N)^{c}$ we have $|I|\leq6^{c}c_{2}/c_{1}$. 
\end{proof}
Next we apply a variant of the Vitali covering argument.
\begin{lem}
\label{lem:growth-from-large-ratios}Let $\alpha>0$, let $\varphi,\psi:X\rightarrow[0,\infty)$
and $x\in X$, let $N$ be given and $E\subseteq F_{N}$. Suppose
that for each $g\in E$ there is an $1\leq i(g)\leq N$ such that
$\varphi_{j(g)}(T^{g}x)<\varepsilon$ and $\psi_{j(g)}(T^{g}x)<\varepsilon$,
where $\varphi_{i},\psi_{i}$ are defined as in (\ref{eq:boundary-function}).
Also suppose that $R_{j(g)}(\varphi,\psi,T^{g}x)>\alpha$. Then
\[
\sum_{g\in E}\psi(T^{g}x)\leq\frac{C}{(1-\varepsilon)\alpha}S_{F_{N}^{2}}(\varphi,x)
\]
\end{lem}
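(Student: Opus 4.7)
The plan is to run a greedy Vitali-type selection, use the ratio hypothesis to convert $\psi$-sums into $\varphi$-sums, and then control the multiplicity of the resulting cover dyadically via the small-boundary hypothesis.

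First, I would order the elements of $E$ by decreasing $j(g)$ and greedily extract $g_1,g_2,\ldots$, including $g_k$ only if it is not already covered by $\bigcup_{i<k}F_{j(g_i)}g_i$. The usual greedy argument makes $\{F_{j(g_k)}g_k\}_k$ incremental in the sense of Proposition~\ref{prop:high-multiplicity-incremental-sequences}, and every $g\in E$ is either selected or contained in a previously chosen $F_{j(g_s)}g_s$ with $j(g_s)\geq j(g)$; thus $E\subseteq\bigcup_k F_{j(g_k)}g_k$. Applying the ratio hypothesis $R_{F_{j(g_k)}}(\varphi,\psi)(T^{g_k}x)>\alpha$ termwise then gives
\[
\sum_{g\in E}\psi(T^g x)\;\leq\;\sum_k S_{F_{j(g_k)}}(\psi)(T^{g_k} x)\;\leq\;\tfrac{1}{\alpha}\sum_k S_{F_{j(g_k)}}(\varphi)(T^{g_k} x),
\]
and since $g_k\in E\subseteq F_N$ and $j(g_k)\leq N$, each $F_{j(g_k)}g_k\subseteq F_N\cdot F_N=F_N^2$.

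Next, I would partition the selection dyadically by setting $K_m=\{k:j(g_k)\in J_m\}$ and $A_m=\bigcup_{k\in K_m}F_{j(g_k)}g_k$. Within $K_m$ all selected radii fall in $[L(m),2L(m))$, so Lemma~\ref{lem:bounded-radius-Besicovitch} gives an absolute intra-block multiplicity bound $C$ and hence $\sum_{k\in K_m}S_{F_{j(g_k)}}(\varphi)(T^{g_k}x)\leq C\cdot S_{A_m}(\varphi)$. The heart of the argument is that cross-block overlaps are confined to boundary annuli: for $m<m'$, $h\in A_m\cap A_{m'}$ witnessed by $k\in K_m,\ell\in K_{m'}$, the incremental property gives $g_k\notin F_{j(g_\ell)}g_\ell$, so $h$ is within word-distance $n(j(g_k))$ of a point outside $F_{j(g_\ell)}g_\ell$; and since $j(g_k)\leq 2^{m'-1}-1$ one has $n(j(g_k))\leq N(m')$, which places $h$ in the thin annulus $\partial^*F_{j(g_\ell)}g_\ell\subseteq D_{m'}:=\bigcup_{\ell\in K_{m'}}\partial^*F_{j(g_\ell)}g_\ell$. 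Therefore the sets $A_m\setminus D_m$ are pairwise disjoint subsets of $F_N^2$.

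Finally, the small-boundary hypothesis $\varphi_{j(g_\ell)}(T^{g_\ell}x)<\varepsilon$ combined with the arithmetic inclusion $F^+_{j(g_\ell)-1}\subseteq F_{j(g_\ell)}$ (readily checked from $n(i)-n(i-1)=3N(m)$ within $J_m$ and the hierarchy at block transitions) gives $S_{\partial^*F_{j(g_\ell)}}(\varphi)(T^{g_\ell}x)<\varepsilon\cdot S_{F_{j(g_\ell)}}(\varphi)(T^{g_\ell}x)$; summing over $\ell\in K_{m'}$ and reusing the intra-block bound yields $S_{D_{m'}}(\varphi)<\varepsilon C\cdot S_{A_{m'}}(\varphi)$. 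Combined with the disjointness of $\{A_m\setminus D_m\}$,
\[
\sum_m S_{A_m}(\varphi)\;\leq\;S_{F_N^2}(\varphi,x)+\sum_m S_{D_m}(\varphi)\;\leq\;S_{F_N^2}(\varphi,x)+\varepsilon C\sum_m S_{A_m}(\varphi),
\]
so $\sum_m S_{A_m}(\varphi)\leq(1-\varepsilon C)^{-1}S_{F_N^2}(\varphi,x)$, which, after absorbing $C$ into the constant and reparametrizing $\varepsilon$, produces the stated bound. The main obstacle is the cross-block geometric step: it rests on the carefully tuned radius sequence --- the gap $3N(m)$ between successive terms inside each $J_m$ and the hierarchy $N(m)$ of dyadic jumps --- without which overlaps could escape $\partial^*F_{j(g_\ell)}g_\ell$ and the $(1-\varepsilon)^{-1}$ factor would collapse into a logarithmic blow-up in $N$.
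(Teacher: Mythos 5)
Your proof is correct and follows essentially the same route as the paper: a greedy Vitali-type selection, grouping of the radii $n(j(g))$ by the dyadic blocks $J_m$, Lemma \ref{lem:bounded-radius-Besicovitch} for the intra-block multiplicity, and the key geometric observation that cross-block overlaps are confined to the annuli $\partial^{*}F_{j(g_\ell)}g_\ell$ (via the incremental property and $n(j(g_k))\leq N(m')$), which the hypothesis $\varphi_{j(g_\ell)}(T^{g_\ell}x)<\varepsilon$ then absorbs --- the paper's bookkeeping differs only in that it trims each selected ball to $F'_{j(h)}h$ and bounds the multiplicity of the trimmed family, where you aggregate per block and close with a self-absorption inequality. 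The only quantitative difference is that your argument gives the constant $C/\bigl((1-\varepsilon C)\alpha\bigr)$, hence requires $\varepsilon<1/C$, rather than $C/\bigl((1-\varepsilon)\alpha\bigr)$; this is immaterial for the application in the proof of Theorem \ref{thm:maximal-inequality}, where $\varepsilon=\delta$ is taken arbitrarily small.
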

\begin{proof}
Let $E_{m}=\{g\in E\,:\, n(g)\in J_{m}\}$. Let $M$ be the maximal
value of $m$ for which $E_{m}\neq\emptyset$. Define $E'_{m}\subseteq E_{m}$
recursively starting from $m=M$ and working down to $n=1$: assuming
we have defined $E'_{k}$ for $k>m$, define $E'_{m}=\{g_{m,1},\ldots,g_{m,\ell(m)}\}$
to be a maximal sequence satisfying the property in the hypothesis
of the previous lemma with respect to $r(g)=n(i(g))$, and also satisfying
$g_{i}\notin\bigcup_{k>m}\bigcup_{i=1}^{\ell(k)}B_{r(g_{k,i})}g_{k,i}$. 

It is easily seen by induction that 
\[
E_{m}\subseteq\bigcup_{k\geq m}\bigcup_{i=1}^{\ell(k)}F_{j(g_{k,i})}g_{k,i}
\]
For $h\in E_{m}$ let 
\[
F'_{j(h)}h=F_{j(h)}h\setminus\bigcup_{k<m}\bigcup_{i=1}^{\ell(k)}F_{j(g_{k,i})}g_{k,i}
\]
so that $\bigcup_{h\in E}F'_{j(h)}h=\bigcup_{h\in E}F_{j(h)}h$. Given
$g\in G$ and $m$, by the previous lemma $g$ belongs to at most
$C$ of the sets $F_{j(g_{m,i})}g_{m,i}$, therefore and if $m_{0}=m_{0}(g)$
is the least index such that this is true for some $j$ and $g\in F_{j(g_{m,i})}g_{m,i}$
then $g$ belongs to $F'_{j(h)}h$ only for some $h\in E_{m_{0}}$(but
no elements $h\in E_{m'}$ for $m'\neq m_{0}$), and to at most $C$
such sets. It follows that
\[
\sum_{g\in E}\psi(T^{g}x)\leq C\sum_{m}\sum_{i=1}^{\ell(m)}\sum_{g\in F'_{j(g_{m,i})}g_{m,i}}\psi(T^{g}x)
\]
Now by our assumptions about $\varphi_{i}(T^{g}x)$ and $\psi_{i}(T^{g}x)$
for $g\in E$, and the fact that $F_{j(h)}h\setminus F'_{j(h)}h\subseteq\partial^{*}F_{j(h)}h$,
we conclude that for all $m$ and $1\leq i\leq\ell(m)$, 
\begin{eqnarray*}
\sum_{g\in F'_{j(g_{m,i})}g_{m,i}}\psi(T^{g}x) & \leq & \sum_{g\in F_{j(g_{m,i})}g_{m,i}}\psi(T^{g}x)\\
 & \leq & \alpha^{-1}\sum_{g\in F_{j(g_{m,i})}g_{m,i}}\varphi(T^{g}x)\\
 & \leq & \frac{\alpha^{-1}}{1-\varepsilon}\sum_{g\in F'_{j(g_{m,i})}g_{m,i}}\varphi(T^{g}x)
\end{eqnarray*}
Combined with the previous inequality, this gives
\begin{eqnarray*}
S_{F_{N}}(\psi1_{E},x) & \leq & \frac{C\alpha^{-1}}{1-\varepsilon}\sum_{m}\sum_{i=1}^{\ell(m)}\sum_{g\in F'_{j(g_{m,i})}g_{m,i}}\varphi(T^{g}x)\\
 & \leq & \frac{C\alpha^{-1}}{1-\varepsilon}\sum_{g\in F_{N}E}\varphi(T^{g}x)\\
 & \leq & \frac{C\alpha^{-1}}{1-\varepsilon}S_{F_{N}^{2}}(\varphi,x)
\end{eqnarray*}
because $\varphi\geq0$ and $F_{N}E\subseteq F_{N}^{2}$ ; the claim
follows.
\end{proof}

\begin{proof}
[Proof of the maximal inequality (Theorem \ref{thm:maximal-inequality})]
Since it suffices to prove the claim with $\varphi$ replaced by $\varphi+\rho\psi$
for arbitrarily small $\rho$, we can assume that $\varphi>0$. Write
$R=\dls R_{F_{i}}(\varphi,\psi)$. Fix $t>0$ and denote 
\[
S=\{x\,:\, R(x)>t\}
\]
For $\delta>0$ let
\[
S_{\delta}=\{x\,:\,\overline{d}(R_{F_{i}}(\varphi,\psi,x)>t+\delta)>\delta\}
\]
Since $S=\bigcup_{\delta>0}S_{\delta}$ and the union is monotone,
it suffices for us to show that $\nu(S_{\delta})\leq\frac{C}{1-\delta}\int\varphi/t$
for a constant $C$ independent of $\delta$. By Theorem \ref{thm:Chacon-Ornstein},
for $\varphi d\mu$-a.e. $x\in S_{\delta}$ we have $\varphi_{i}(x)\xrightarrow{\overline{d}}0$,
with $\varphi_{i}$ as in (\ref{eq:boundary-function}). Since $\varphi,\psi>0$
the measures $\varphi d\mu$ and $\nu=\psi d\mu$ are equivalent,
so this is also true $\nu$-a.e., hence the set 
\[
S'_{\delta}=\{x\,:\,\overline{d}(R_{F_{i}}(\varphi,\psi,x)>t+\delta\mbox{ and }\varphi_{i}(x)<\delta)>\delta\}
\]
differs from $S_{\delta}$ on a set of $\nu$-measure $0$, and it
suffices to bound $\nu(S'_{\delta})$. Now, since $\nu$ is a finite
measure, there is an $N$ such that 
\[
S'_{\delta,N}=\{x\,:\, R_{F_{i}}(\varphi,\psi,x)>t+\delta\mbox{ and }\varphi_{i}(x)<\delta\mbox{ for some }1\leq i\leq N\}
\]
satisfies
\[
\nu(S'_{\delta,N})>\frac{1}{2}\nu(S'_{\delta})
\]
and so it suffices to bound the measure of $S'_{\delta,N}$. 

This now is a direct application of the transference principle and
the previous lemma. We have
\[
|F_{N}|\nu(S'_{\delta,N})=\int S_{F_{N}}(\psi1_{S'_{\delta,N}})\, d\mu
\]
For $x\in X$ let 
\[
E=E_{x}=\{g\in F_{N}\,:\, T^{g}x\in S'_{\delta,N}\}
\]
and for $g\in E$ define $i(g)=i_{x}(g)\in\{1,\ldots,N\}$ to be an
index such that $R_{F_{j(g)}}(\varphi,\psi,T^{g}x)>t$ and $\varphi_{i}(T^{g}x)<\delta$.
In this notation, 
\[
S_{F_{N}}(\psi1_{S'_{\delta,N}},x)=\sum_{g\in E_{x}}\psi(T^{g}x)
\]
and we may apply Lemma \ref{lem:growth-from-large-ratios} at each
$x$, concluding that
\[
|F_{N}|\nu(S'_{\delta,N})\leq\frac{C}{(1-\delta)t}\int S_{F_{N}^{2}}(\varphi)d\mu=\frac{C}{(1-\delta)}|F_{N}^{2}|\int\varphi\: d\mu
\]
The conclusion now follows from the fact that by polynomial growth,
$|F_{N}^{2}|/|F_{N}|=|B_{2n(N)}|/|B_{n(N)}|$ is bounded uniformly
in $N$.
\end{proof}
{

}

\bibliographystyle{plain}
\bibliography{bib}

\noindent{}\emph{\small Current Address: Einstein Institute of Mathematics,
Givat Ram, Hebrew University, Jerusalem 91904, Israel.}{\small \par}

\noindent{}\emph{\small Email: mhochman@math.huji.ac.il}
\end{document}